\documentclass[11pt]{article}
\usepackage[top=2.5cm, bottom=2.5cm, left=2.5cm, right=2.5cm]{geometry}
\usepackage{graphicx}
\usepackage{makeidx}
\usepackage{graphics}
\graphicspath{ {images/} }
\usepackage{amsmath}
\usepackage{amsthm}
\usepackage{amsfonts}
\usepackage{amssymb}
\usepackage{epsfig}
\usepackage{tabularx}
\usepackage{latexsym}
\usepackage{mathtools}
\usepackage{epstopdf}
\usepackage{array}
\usepackage[utf8]{inputenc}

\newtheorem{lemma}{Lemma}
\newtheorem{result}{Result}

\newcommand{\nontwo}{non-2-colorable}

\newcommand{\twocolor}{2-colorable}
\newcommand{\twocoloring}{2-coloring}
\newcommand{\twocolorings}{2-colorings}
\newcommand{\mn}[1][n]{m(#1)}
\newcommand{\mninline}[1][n]{$m(#1)$}
\newcommand{\mvninline}[2]{$m_{#1}(#2)$}
\newcommand{\mnabtinline}[1][n]{$m_A(#1)$}
\newcommand{\mnabtmath}[1][n]{m_A(#1)}
\newcommand{\nuniform}[1][n]{$#1$-uniform}
\newcommand{\nontwocolornhg}[1][n]{\nontwo\ \nuniform[#1]\ \hg}
\newcommand{\nontwocolornhgs}[1][n]{\nontwocolornhg[#1]s}

\newcommand{\hg}{hypergraph}
\newcommand{\Hg}{Hypergraph}
\newcommand{\hgs}{hypergraphs}
\newcommand{\Hgs}{Hypergraphs}
\newcommand{\he}{hyperedge}

\newcommand{\hes}{hyperedges}

\newcommand{\hgnot}{$H = (V, E)$}
\newcommand{\hgprimenot}{$H' = (V', E')$}
\newcommand{\cushg}[1]{H_{#1} = (V_{#1}, E_{#1})}
\newcommand{\cushgnot}[1]{$\cushg{#1}$}
\newcommand{\cushgprime}[1]{H'_{#1} = (V'_{#1}, E'_{#1})}
\newcommand{\cushgprimenot}[1]{$\cushgprime{#1}$}
\newcommand{\mono}{monochromatic}

\newcommand{\monohe}{\mono\ \he}

\newcommand{\hyperedgecustset}[3]{$#1 = \{#2_1, #2_2, \ldots, #2_{#3}\}$}
\newcommand{\hyperedgeset}[2]{\hyperedgecustset{#1}{e}{#2}}
\newcommand{\vertexset}[3]{$#1 = \{#2_1, #2_2, \ldots, #2_{#3}\}$}
\newcommand{\vertexsubset}[3]{$#1 = \bigcup_{i \in #2} \{#3_i\}$}
\newcommand{\card}[1]{\lvert #1 \rvert}
\newcommand{\cardinline}[1]{$\card{#1}$}
\newcommand{\floor}[1]{\lfloor #1 \rfloor}
\newcommand{\ceil}[1]{\lceil #1 \rceil}
\newcommand{\bk}{best-known}
\newcommand{\colorinline}[1][]{$\chi_{#1}$}
\newcommand{\matchingv}[1][i]{$\{a_#1, b_#1\}$}

\newcommand{\mnlowerblocked}[2]{\bigg\lceil\frac{\binom{#1}{\lfloor #1/2
            \rfloor}} {{\binom{#1 - #2}{\lfloor #1/2 \rfloor - #2} + \binom{#1 -
            #2}{\lceil #1/2 \rceil - #2}}}\bigg\rceil}

\newcommand{\mnlowerschoheim}[3]{\Big\lceil{\frac{#1}{#2}
        \Big\lceil{\frac{#1 - 1}{#3}}\Big\rceil}\Big\rceil}

\newcommand{\mthirteengenabtimprov}{275835}
\newcommand{\meightmultiimprov}{1212}
\newcommand{\mthirteenmultiimprov}{203139}
\newcommand{\mforteenmultiimprov}{528218}
\newcommand{\msixteenmultiimprov}{3923706}
\newcommand{\mseventeenmultiimprov}{10375782}
\newcommand{\mthirteenblockimprov}{200889}
\newcommand{\msixteenblockimprov}{3308499}

\newcommand{\iitgaddress}{IIT Guwahati, India.}

\newcommand{\iitroparaddress}{IIT Ropar, India.}
\newcommand{\gmail}[1]{Email: \texttt{#1@gmail.com}}
\newcommand{\itt}[1]{\textit{#1}}


\title{Improved Bounds for Uniform Hypergraphs without Property B}
\author{
Sachin Aglave\footnote{\iitgaddress\ \gmail{sachin.aaglave}} 
\and
V. A. Amarnath\footnote{\iitgaddress\ \gmail{vaamarnath}}
\footnote{Corresponding Author}
\and
Saswata Shannigrahi\footnote{\iitroparaddress\ \gmail{saswata.shannigrahi}}
\and 
Shwetank Singh\footnote{\iitgaddress\ \gmail{shwetanksinghsrmscet}}
}

\begin{document}
\maketitle

\begin{abstract}
    A \hg\ is said to be properly \twocolor\ if there exists a \twocoloring\ of its
    vertices such that no \he\ is monochromatic. On the other hand, a \hg\ is
    called \nontwo\ if there exists at least one monochromatic \he\ in each of
    the possible 2-colorings of its vertex set.  Let \mninline\ denote the
    minimum number of \hes\ in a \nontwocolornhg.  Establishing the lower and
    upper bounds on \mninline\ is a well-studied research direction over
    several decades. In this paper, we present new constructions for non-$2$-colorable
    $n$-uniform hypergraphs. These constructions improve the upper bounds for $m(8)$,
    $m(13)$, $m(14)$, $m(16)$ and $m(17)$. We also improve the lower bound for \mninline[5]. 

    \vspace{.3cm}
    \noindent \textbf{Keywords:}  Property B; Uniform \Hgs; \Hg\ \twocoloring
\end{abstract}

\section{Introduction}

\Hgs\ are combinatorial structures that are generalizations of graphs.  Let
\hgnot\ be an \nuniform\ \hg\ with vertex set $V,$ with each \he\ in $E$ having
exactly $n$ vertices in it.  A \itt{2-coloring} of $H$ is an assignment of one
of the two colors red and blue to each of the vertices in $V$. We say a
\twocoloring\ of $H$ to be \itt{proper} if each of its \hes\ has red as well as
blue vertices. $H$ is said to be \itt{non-2-colorable} if no proper 2-coloring
exists for it; otherwise, it is said to satisfy \itt{Property B}. For an integer
$n \geq 1$, let \mninline\ denote the minimum number of \hes\ present in a
\nontwocolornhg.

Establishing an upper bound on \mninline\ is a well-explored combinatorial
problem.  Erd\H{o}s \cite{erdos1963combinatorial} gave a non-constructive proof
to establish the currently best-known upper bound $\mn = O(n^2 2^n)$. 
However, there is no known construction for a non-$2$-colorable $n$-uniform hypergraph
that matches this upper bound.
Abbott and Moser \cite{abbott1964combinatorial} constructed a non-$2$-colorable $n$-uniform hypergraph
with $O((\sqrt{7} + o(1))^n)$ hyperedges. Recently, Gebauer
\cite{gebauer2013construction} improved this result by constructing a non-$2$-colorable $n$-uniform hypergraph with $O(2^{(1 + o(1))n})$
hyperedges. Even though this is the best construction known for a
non-$2$-colorable $n$-uniform hypergraph for large $n$, it is still asymptotically far from the
above-mentioned non-constructive upper bound given by Erd\H{o}s.

Finding upper bounds for small values of $n$ is also a well-studied problem and
several constructions have been given for establishing these. For example, it
can be easily seen that $m(1) \leq 1$, $m(2) \leq 3$ (the corresponding $2$-uniform 
hypergraph is the triangle graph) and $m(3) \leq 7$ (the corresponding $3$-uniform 
hypergraph is known as the Fano plane \cite{klein1870theorie}, denoted by $H_f$ in this paper). The previously-mentioned 
construction of Abbott and Moser shows that $m(4) \leq 27$, $m(6) \leq 147$ and 
$m(8) \leq 2187$. Moreover, their construction also gives non-trivial upper
bounds on $m(n)$ for $n= 9, 10, 12, 14, 15$ and $16$. For $n \geq 3$, Abbott and Hanson
\cite{abbott1969combinatorial} gave a construction using a \nontwocolornhg[(n-2)]\ 
to show that $m(n) \leq n \cdot m(n - 2) + 2^{n - 1} + 2^{n-2}((n - 1) \mod 2)$.
Using the \bk\ upper bounds on \mninline[n-2], this recurrence relation establishes non-trivial
upper bounds as well as improve such bounds on \mninline\ for a few small values of $n$. For example, it shows
that $m(4) \leq 24$, $m(5) \leq 51$ and $m(7) \leq 421$. Seymour
\cite{seymour1974note} further improved the upper bound on \mninline[4]\ to $m(4) \leq
23$ by constructing a \nontwocolornhg[4]\ with 23 \hes. In this paper, we denote this hypergraph by $H_s$. 
For even integers $n \geq 4$, Toft \cite{toft1973color} generalized this construction using a
\nontwocolornhg[(n-2)]\ to improve Abbott and Hanson's result to $m(n) \leq n
\cdot m(n - 2) + 2^{n - 1} + \binom{n}{n/2}/2$. In particular, this led to
establishing an upper bound $m(8) \leq 1339$.  For a given integer $n \geq 3$
and a \nontwocolornhg[(n-2)]\ $A$, we refer to Abbott-Hanson's construction for
odd $n$ and Toft's construction for even $n$ as \itt{Abbott-Hanson-Toft
construction} and denote the number of \hes\ in such a \hg\ as \mnabtinline.
It can be easily observed that $\mn \leq \mnabtmath$ for any
\nontwocolornhg[(n-2)]\ $A$. In fact, we have already seen that the
above-mentioned upper bounds $\mn[4] \leq 23$, $\mn[5] \leq 51$, $\mn[7] \leq
421$ and $\mn[8] \leq 1339$ are obtained by Abbott-Hanson-Toft constructions
using the \bk\ constructions for \nontwo\ 2, 3, 5 and 6-uniform \hgs, respectively.  Recently, a
construction given by Mathews et al.  \cite{mathews2015construction} improved
the upper bound on $m(8)$ to $m(8) \leq 1269$.  In addition, they modified the
Abbott-Hanson-Toft construction to improve the upper bounds on \mninline\ for
$n = 11, 13$ and $17$. The currently \bk\ upper bounds on \mninline\ for $n \leq 17$ 
are given in Table \ref{table-cur-known-upper}.

\begin{table}
\begin{center}
\begin{tabular}{|c|c|c|}
\hline
$n$ & $m(n)$ & Corresponding construction/recurrence relation \\
\hline
1   & $m(1) = 1$            & Single Vertex\\
2   & $m(2) = 3$            & Triangle Graph\\
3   & $m(3) = 7$            & Fano Plane \cite{klein1870theorie}\\
4   & $m(4) = 23$           & \cite{ostergaard2014minimum},
\cite{seymour1974note}\\
5   & $m(5) \leq 51$        & $m(5) \leq 2^4 + 5 m(3)$\\
6   & $m(6) \leq 147$       & $m(6) \leq m(2) m(3)^2$\\
7   & $m(7) \leq 421$       & $m(7) \leq 2^6 + 7 m(5)$\\
8   & $m(8) \leq 1269$      & \cite{mathews2015construction}\\
9   & $m(9) \leq 2401$      & $m(9) \leq m(3)^4$\\
10  & $m(10) \leq 7803$     & $m(10) \leq m(2) m(5)^2$\\
11  & $m(11) \leq 25449$    & $m(11) \leq 15 \cdot 2^8 + 9 m(9)$\\
12  & $m(12) \leq 55223$    & $m(12) \leq m(3)^4 m(4)$\\
13  & $m(13) \leq 297347$   & $m(13) \leq 17 \cdot 2^{10} + 11 m(11)$ \\
14  & $m(14) \leq 531723$   & $m(14) \leq m(2) m(7)^2$\\
15  & $m(15) \leq 857157$   & $m(15) \leq m(3)^5 m(5)$\\
16  & $m(16) \leq 4831083$  & $m(16) \leq m(2) m(8)^2$\\
17  & $m(17) \leq 13201419$ & $m(17) \leq 21 \cdot 2^{14} + 15 m(15)$ \\
\hline
\end{tabular}
\end{center}
\caption{Best-known upper bounds on $m(n)$ for small values of $n$}
\label{table-cur-known-upper}
\end{table}

In the other direction, Erd\H{o}s \cite{erdos1963combinatorial} showed the lower
bound on \mninline\ to be $\mn = \Omega(2^n)$, which was later improved by Beck
\cite{beck19783} to $m(n) = \Omega(n^{1/3 - o(1)} 2^n)$. The currently \bk\
lower bound $m(n) = \Omega(\sqrt{\frac{n}{\ln n}} 2^n)$ was given by
Radhakrishnan and Srinivasan \cite{radhakrishnan2000improved}. Recently, a
simpler proof for the same result has been given by Cherkashin and Kozik
\cite{cherkashin2014note}. Note that there is a significant
asymptotic gap between the currently best-known lower and upper bounds on $m(n)$.
Even for small values of $n$, we are only aware
of a few lower bounds for \mninline\ that match the corresponding upper
bounds. It can be easily seen that $m(1) \geq 1$, $m(2) \geq 3$ and $m(3) \geq
7$ and therefore $m(1) = 1$, $m(2) = 3$ and $m(3) = 7$. Recently,
{\"O}sterg{\aa}rd \cite{ostergaard2014minimum} showed that $m(4) \geq 23$ and
established $m(4) = 23$ as a result.  
The exact values of \mninline\ are not yet known for $n \geq 5$, even though
it can be easily observed that $m(n+1) \geq m(n)$ for any $n \geq 1$. 

\subsection{Our Contributions}
In this paper, we give constructions that improve the \bk\ upper bounds on
$m(8)$, $m(13)$, $m(14)$, $m(16)$ and $m(17)$. We also establish a non-trivial
lower bound on \mninline[5]. 

In Section \ref{upper-const-one}, we give a construction that gives the
following recurrence relation. In particular, it improves the 
upper bound on $m(13)$. 
\begin{result}
    \label{result-gen-abt} 
    Consider an integer $k \geq 1$. 
    For an odd $n > 2k$,  
    $m(n) \leq \binom{n+k-1}{k} m(n-2k) + \binom{n+k-1}{k-1} 2^{n-1}$. 
    For an even $n > 2k$, 
    $m(n) \leq \binom{n+k-1}{k} m(n-2k) + \binom{n+k-1}{k-1} (2^{n-1} +
    \binom {n}{n/2}/2)$. 
\end{result}

This construction also gives a \nontwocolornhg\ with $O(3.76^n)$ \hes. Even
though we note that it gives a better constructive upper bound $m(n) =
O(3.76^n)$ than the trivial bound $m(n) \leq \binom{2n-1}{n} =
\Theta(4^{n}/\sqrt{n})$, it is asymptotically worse than the previously
mentioned constructive upper bounds  $m(n) = O((\sqrt{7} + o(1))^n)$
\cite{abbott1964combinatorial} and $m(n) = O(2^{(1 + o(1))n})$
\cite{gebauer2013construction}. 

In Section \ref{upper-const-two}, we provide another construction that improves
the upper bounds on $m(8)$, $m(13)$, $m(14)$, $m(16)$ and $m(17)$. 
\begin{result}
Consider an integer $k$ satisfying $0 < k < n $. Let $w = \floor{n/k}$, $x =
n\mod k$, $y = \floor{k/x}$ and $z = k\mod x$.
    \label{result-multicore}
    \begin{enumerate}
        \item[(a)] If $x > 0$ and $z > 0$, $m(n) \leq w \cdot m(n-k) m(k) + y \cdot
            m(k)^w m(x) + \tbinom{x+z-1}{z} m(n-k) m(x)^y + \tbinom{x+z-1}{x}
        m(k)^w$.
        \item[(b)] If $x > 0$ and $z = 0$, $m(n) \leq w \cdot m(n-k) m(k) + y \cdot
            m(k)^w m(x) + m(n-k) m(x)^y$.
        \item[(c)] If $x = 0$, $m(n) \leq w \cdot m(n-k) m(k) + m(k)^w$.
    \end{enumerate}
\end{result}

In Section \ref{upper-const-five}, we give a construction to prove the following
result that further improves the upper bounds on $m(13)$ and $m(16)$.
\begin{result}
    \label{result-block}
    Consider an integer $k \geq 2$ and a \nontwocolornhg[(k-1)]\ $H_{1c}$. 
    Then, $m(3k+1) \leq (m(k - 1) + 2^{k - 1})
    m(k + 1)^2 + 2 m_{H_{1c}}(k + 1) m(k)^2 + 4 m(k + 1) m(k)^2$.
\end{result}

\noindent In Section \ref{our-work-low}, we improve the currently \bk\ lower
bound $m(5) \geq 28$. 

\begin{result}
    \label{result-improv-lower}
    $m(5) \geq 29$.
\end{result}

\section{Previous Results \label{prev-works}} 

\subsection{Abbott-Moser Construction} 
\label{prev-result-abbott-moser}
Abbott and Moser \cite{abbott1964combinatorial} gave the construction for a
\nontwocolornhg\ \hgnot\ by exploiting the known constructions of \nontwo\
\nuniform[a]\  and \nuniform[b]\ \hgs\ for any composite $n$ satisfying $n = ab$
for two integers $a \geq 1, b \geq 1$.\footnote{Note that the notations used in a
    sub-section are not related to the notations used in other sub-sections,
    unless specified otherwise.} Let \cushgnot{a} and \cushgnot{b} be  \nontwo\
\nuniform[a]\ and \nuniform[b] \hgs, respectively. $H$ is constructed using
\cardinline{V_a} identical copies of $H_b$ by replacing each vertex of $H_a$ with
a copy of $H_b$.  Let us denote the copies of $H_b$ as $\cushg{b_1},
\cushg{b_2}, \ldots, \cushg{b_{\card{V_a}}}$. The vertex set of $H$ is $V =
V_{b_1} \cup V_{b_2} \cup \cdots \cup V_{b_{\card{V_a}}}$. The \he\ set of $H$ is
constructed as follows. For each \he\ $\{v_1, \ldots, v_a\}$ in $E_a$, the
following collection of \hes\ $\{\{e_1  \cup \cdots \cup e_a \}: e_1 \in
E_{b_{v_1}}, \ldots, e_a \in E_{b_{v_a}} \}$ is added to $E$. The resulting \hg\
$H$ is \nuniform\ and it is evident from the construction that it has
$\card{E_a} \card{E_b}^a$ \hes. Abbott and Moser \cite{abbott1964combinatorial} 
showed that $H$ is \nontwo, thereby proving the following result.

\begin{lemma}
    \label{prev-result-abbott-moser-lemma}
        For any composite $n$ satisfying $n = ab$ for integers $a, b \geq 1$, $m(n)
        \leq m(a) m(b)^a$.
\end{lemma}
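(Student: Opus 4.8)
The construction of $H$ is already specified, and its uniformity together with its edge count $\card{E_a}\card{E_b}^a$ are immediate from the description. Hence the only thing that remains is to verify that $H$ is \nontwo; the bound $\mn \leq \mn[a]\mn[b]^a$ then follows at once by instantiating $H_a$ and $H_b$ as minimum \nontwocolornhg[a]\ and \nontwocolornhg[b]\ respectively.

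The plan is a proof by contradiction. Suppose $H$ admits a proper \twocoloring\ $\chi \colon V \to \{\text{red}, \text{blue}\}$. First I would examine each copy $H_{b_i}$ in isolation. Restricting $\chi$ to $V_{b_i}$ yields a \twocoloring\ of a \hg\ that is a copy of $H_b$; since $H_b$ is \nontwo, this restricted coloring cannot be proper, so each copy $H_{b_i}$ contains at least one \monohe. I would fix one such edge per copy, call it $e^{(i)} \in E_{b_i}$, and let $c_i \in \{\text{red}, \text{blue}\}$ be its (single) color.

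Next I would transfer this information up to $H_a$. The assignment $v_i \mapsto c_i$ defines a \twocoloring\ of $V_a$. Because $H_a$ is \nontwo, this induced coloring has a \monohe\ $\{v_{j_1}, \ldots, v_{j_a}\} \in E_a$, all of whose vertices receive the same color, say red; that is, $c_{j_1} = \cdots = c_{j_a} = \text{red}$. The closing step is to observe that, by the very definition of the edge set of $H$, the union $e^{(j_1)} \cup \cdots \cup e^{(j_a)}$ is a \he\ of $H$ (precisely because $v_{j_1}, \ldots, v_{j_a}$ form an edge of $H_a$). Each $e^{(j_\ell)}$ is entirely red under $\chi$, so this union is \mono\ red, contradicting that $\chi$ is proper. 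Thus no proper \twocoloring\ of $H$ exists, and $H$ is \nontwo.

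The argument is short; the only point requiring care is the bookkeeping in the selection step. One must pick exactly one \monohe\ per copy so that the color $c_i$ is well-defined, and then confirm that the union of the selected edges over the chosen $H_a$-edge genuinely lies in $E$ rather than being merely an arbitrary set of $ab$ vertices. This is exactly what the construction guarantees through the correspondence between \hes\ of $H_a$ and the associated families of \hes\ of $H$, so I expect no real obstacle beyond stating that correspondence cleanly.
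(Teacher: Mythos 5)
Your proposal is correct and is exactly the standard Abbott--Moser argument: each copy of $H_b$ forces a \monohe, the colors of these edges induce a \twocoloring\ of $V_a$ whose \monohe\ lifts to a \monohe\ of $H$. The paper itself only describes the construction and defers the non-2-colorability claim to the cited reference, so your write-up supplies precisely the omitted verification with no gaps.
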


This construction gives the \bk\ upper bounds for some small values of $n$. For
example, it shows that $m(6) \leq 147$, $m(9) \leq 2401$, $m(10) \leq 7803$,
$m(12) \leq 55223$, $m(14) \leq 531723$, $m(15) \leq 857157$ and $m(16) \leq 4831083$.

\subsection{Abbott-Hanson-Toft Construction}
\label{prev-result-abbott-hanson-toft}
As mentioned in the introduction, Abbott-Hanson's construction
\cite{abbott1969combinatorial} for odd $n$ along with Toft's construction
\cite{toft1973color} for even $n$ is referred to as Abbott-Hanson-Toft
construction. For a given $n \geq 3$, this construction is built using a
\nontwocolornhg[(n-2)], which we call as the \itt{core hypergraph} and denote by
\cushgnot{c}. Let its \he\ set be \hyperedgeset{E_c}{m_c}. 

Let $A$ and $B$ be two disjoint sets of vertices where \vertexset{A}{a}{n} and
\vertexset{B}{b}{n}, each disjoint with $V_c$. For a given $K \subset \{1, 2,
\ldots, n\}$, we define \vertexsubset{A_K}{K}{a}, \vertexsubset{B_K}{K}{b},
$\overline{A}_{K} = A \setminus A_{K}$ and $\overline{B}_{K} = B \setminus B_K$.

The construction of the \nontwocolornhg\ \hgnot\ is as follows. The vertex set
is $V = V_c \cup A \cup B$ and the \he\ set $E$ consists of the following
\hes: 
\begin{enumerate}
    \item[(i)] $e_i \cup \{a_j\} \cup \{b_j\}$ for every pair $i, j$ satisfying $1 \leq i
        \leq m_c$ and $1 \leq j \leq n$
    \item[(ii)] $A_K \cup \overline{B}_K$ for each $K$ such that \cardinline{K} is
        odd and $1 \leq \card{K} \leq \floor{n/2}$ 
    \item[(iii)] $\overline{A}_K \cup B_K$ for each $K$ such that \cardinline{K} is even
        and $2 \leq \card{K} \leq \floor{n/2}$
    \item[(iv)] $A$
\end{enumerate}

It is easy to observe that the number of \hes\ in $H$ is $2^{n-1} + n m_c$ for
odd $n$ and $2^{n-1} + n m_c + \binom{n}{n/2}/2$ for even $n$.  Abbott-Hanson
\cite{abbott1969combinatorial} and Toft \cite{toft1973color} proved that $H$ is
\nontwo, and the construction gives the upper bound on \mninline\ as follows.

\begin{lemma}
    \label{prev-result-abbott-hanson-toft-lemma}
    \[
        m(n) \leq 
        \begin{cases}
            2^{n-1} + n \cdot m(n-2) & \text{if } n \text{ is odd} \\
            2^{n-1} + n \cdot m(n -2) + \binom{n}{n/2}/2 & \text{if } n \text{
                is even}
        \end{cases}
    \]
\end{lemma}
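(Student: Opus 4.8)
The plan is to verify three properties of the constructed \hg\ $H$: that it is \nuniform, that it contains the stated number of \hes, and that it is \nontwo. The first two are routine. Every \he\ of type (i) has $(n-2)+2=n$ vertices, every \he\ of type (ii) or (iii) has $\card{K}+(n-\card{K})=n$ vertices, and type (iv) has $n$ vertices, so $H$ is \nuniform. For the count, type (i) contributes $n\,m_c$ \hes, while types (ii)--(iv) together contribute $\sum_{k=1}^{\floor{n/2}}\binom{n}{k}+1$; splitting this binomial sum according to the parity of $n$ gives $2^{n-1}$ for odd $n$ and $2^{n-1}+\binom{n}{n/2}/2$ for even $n$. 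Taking the core $H_c$ to be a minimum-size \nontwocolornhg[(n-2)], so that $m_c=m(n-2)$, then yields the claimed recurrence.

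The substance is non-2-colorability, which I would prove by contradiction. Suppose $H$ has a proper \twocoloring\ $\chi$. Since exchanging the two colours preserves properness and converts a blue \monohe\ into a red one, and since the core $H_c$ is \nontwo\ (so $\chi$ restricted to $V_c$ makes some $e_i$ \mono), I may assume without loss of generality that $\chi$ colours some core \he\ $e_i$ entirely red. The type-(i) \hes\ $e_i\cup\{a_j\}\cup\{b_j\}$ now force structure on $A\cup B$: as $e_i$ is entirely red, for each $j$ at least one of $a_j,b_j$ is blue, i.e. no pair $\{a_j,b_j\}$ is entirely red. This is the only coupling between the core and $A\cup B$, so the problem reduces to showing that $\chi$ restricted to $A\cup B$, subject to this ``no all-red pair'' constraint, must make one of the \hes\ of types (ii)--(iv) \mono.

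To handle the remaining \hes\ uniformly, I would write $T(J)=A_J\cup\overline{B}_J$ for the transversal containing $a_j$ for $j\in J$ and $b_j$ for $j\notin J$. Then the type-(ii) \hes\ are exactly the $T(K)$ with $\card{K}$ odd and $1\le\card{K}\le\floor{n/2}$, the type-(iii) \hes\ are the $T(\overline{K})$ with $\card{K}$ even and $2\le\card{K}\le\floor{n/2}$, and type (iv) is $T(\{1,\ldots,n\})$. Partition the indices by colour pattern: $S_1=\{j:a_j\text{ red},\,b_j\text{ blue}\}$, $S_2=\{j:a_j\text{ blue},\,b_j\text{ red}\}$, and $S_3=\{j:a_j\text{ blue},\,b_j\text{ blue}\}$ (the pattern ``both red'' is forbidden by the constraint). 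A direct check shows that $T(J)$ is entirely red precisely when $S_3=\emptyset$ and $J=S_1$, and entirely blue precisely when $S_2\subseteq J\subseteq S_2\cup S_3$; moreover \he\ (iv) being non-\mono\ forces $1\le\card{S_1}\le n-1$.

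The final and hardest step is to exhibit a \mono\ member of this list. Because every index set of a given cardinality occurs among the type-(ii)/(iii) \hes, it is enough to hit an admissible cardinality: a blue \monohe\ $T(J)$ with $\card{J}=t$ exists whenever $\card{S_2}\le t\le\card{S_2}+\card{S_3}$ and $t$ is \emph{realizable}, the realizable cardinalities being the odd $t$ with $1\le t\le\floor{n/2}$ (type (ii)) together with the $t$ for which $n-t$ is even and $2\le n-t\le\floor{n/2}$ (type (iii)); and when the forced interval $[\card{S_2},\card{S_2}+\card{S_3}]$ avoids every realizable cardinality one instead uses the red transversal $T(S_1)$, which is available exactly when $S_3=\emptyset$. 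I expect the main obstacle to be precisely this covering argument: an elementary but fiddly case analysis, split on the parity of $n$ and on where the forced interval sits, confirming that this interval always meets a realizable cardinality (or else the red option applies). The bounds $1\le\card{S_1}\le n-1$ supplied by \he\ (iv) are what eliminate the boundary cases and make the short odd-cardinality family and the long $n$-parity family jointly exhaustive, producing the required \monohe\ and hence the contradiction.
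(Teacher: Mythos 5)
First, a point of reference: the paper does not prove this lemma itself; Section \ref{prev-result-abbott-hanson-toft} only describes the Abbott--Hanson--Toft construction and cites \cite{abbott1969combinatorial} and \cite{toft1973color} for the proof that $H$ is \nontwo, so there is no in-paper argument to measure yours against. Your routine parts are correct: the uniformity check, the count (the parity split of $\sum_{k=0}^{\floor{n/2}}\binom{n}{k}$ into $2^{n-1}$ versus $2^{n-1}+\binom{n}{n/2}/2$), and the reduction are all sound. After normalizing so that some core \he\ is red, the type-(i) \hes\ do forbid an all-red pair $\{a_j,b_j\}$; the transversal notation $T(J)$ correctly unifies types (ii)--(iv) (type (iii) is $T(\overline{K})$); and your characterizations of when $T(J)$ is all red ($S_3=\emptyset$ and $J=S_1$) or all blue ($S_2\subseteq J\subseteq S_2\cup S_3$), together with the constraint $1\le\card{S_1}\le n-1$ coming from the \he\ $A$, are accurate.

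The gap is that you stop at exactly the load-bearing step: you describe the covering case analysis and say you ``expect'' it to confirm the claim, but you do not carry it out, and you also gloss over the fact that the red fallback $T(S_1)$ is only usable when $\card{S_1}$ is itself a realizable cardinality --- being monochromatic is not enough if no \he\ of that size exists. The analysis does close. For odd $n$ it is short: the realizable cardinalities are exactly the odd $t\in[1,n]$ (type (iii) gives the odd $t\in[\ceil{n/2},\,n-2]$ and type (iv) gives $n$), so if $S_3\neq\emptyset$ the interval $[\card{S_2},\,\card{S_2}+\card{S_3}]$ contains two consecutive integers and hence an odd one, while if $S_3=\emptyset$ one of $\card{S_1}$ and $n-\card{S_1}$ is odd because they sum to $n$. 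For even $n$ the realizable set is the odd $t\le n/2$, the even $t$ with $n/2\le t\le n-2$, and $n$; one checks that no two consecutive integers in $[0,n-1]$ are both non-realizable (the even non-realizable values lie below $n/2$ and the odd ones above it, so they cannot be adjacent), and that for $S_3=\emptyset$ no pair $\{m,\,n-m\}$ with $1\le m\le n-1$ is entirely non-realizable. Until this verification is written out, what you have is a correct and complete plan rather than a proof.
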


Lemma \ref{prev-result-abbott-hanson-toft-lemma} gives the \bk\ upper bounds on
\mninline\ for $n=5$ and $7$ as $m(5) \leq 51$ and $m(7) \leq 421$, respectively.

\subsection{Mathews-Panda-Shannigrahi Construction}

The following construction of a \nontwocolornhg\ for $n \geq 3$ is an
improvement over the Abbott-Hanson-Toft construction mentioned above. Similar to the Abbott-Hanson-Toft
construction, this construction also utilizes a \nontwocolornhg[(n-2)]
\cushgnot{c} that is called the core \hg\ in Section
\ref{prev-result-abbott-hanson-toft}.  Let \hyperedgecustset{E_c}{e}{m_c}.  In
addition, this construction uses two disjoint vertex sets \vertexset{A}{a}{n}
and \vertexset{B}{b}{n}, each disjoint from $V_c$.  Let $B^1$ denote the ordered
set $B^1 = (b_1, b_2, \ldots, b_n)$, where the ordering is defined as $b_1 \prec
b_2 \prec \ldots \prec b_n$.  For any $1 \leq p \leq n$, let $B^p$ denote the
ordered set where $b_1$ and $b_p$ are swapped in the ordering. Let the ordered
set $B^p = (b_p, b_2, \ldots, b_{p-1}, b_1, b_{p+1}, \ldots, b_n)$ be denoted by
$(w^p_1, w^p_2, \ldots, w^p_n)$, where the ordering is given as $w^p_1 \prec
w^p_2 \prec \ldots \prec w^p_n$. For $K \subset \{1, 2, \ldots, n\}$, let
\vertexsubset{A_K}{K}{a}, $\overline{A}_K = A \setminus A_K$,
\vertexsubset{B_K^p}{K}{w^p} and $\overline{B}_K^p = B \setminus B_K^p$.

The construction of the \nontwocolornhg\ \hgnot\ is defined as follows. The
vertex set is $V = V_c \cup A \cup B$ and the \he\ set $E$ consists of following
\hes:
\begin{enumerate}
    \item[(i)] $e_i \cup \{a_j\} \cup \{b_j\}$ for every pair $i, j$ satisfying $1 \leq i \leq
        m_c$ and $2 \leq j \leq n$
    \item[(ii)] $A_K \cup \overline{B}_K^p$ for each $p$ satisfying $1 \leq p \leq n$, and
        each $K$ such that \cardinline{K} is odd and $1 \leq \card{K} \leq
        \floor{n/2}$
    \item[(iii)] $\overline{A}_K \cup B_K^p$ for each $p$ satisfying $1 \leq p \leq n$, and 
        each $K$ such that \cardinline{K} is even and $2 \leq \card{K} \leq
        \floor{n/2}$
    \item[(iv)] $A$
\end{enumerate}

It can be seen that the number of \hes\ in $H$ is at most $(n + 1) 2^{n-2} + (n - 1) m_c$ 
when $n$ is odd and $(n + 1)2^{n-2} + \binom{n}{n/2}/2
+ (n - 1) \Big(m_c + \binom{n-2}{(n-2)/2}\Big)$ when $n$ is even. Mathews et al.
\cite{mathews2015construction} showed that $H$ is \nontwo, which gives the
following result.

\begin{lemma}
    \[
        m(n) \leq 
        \begin{cases}
            (n + 1)2^{n-2} + (n - 1) \cdot m(n - 2) & \text{if } n \text{ is
                odd} \\
            (n + 1)2^{n-2} + \binom{n}{n/2}/2 + (n - 1) \Big(m(n - 2) +
            \binom{n-2}{(n-2)/2}\Big) & \text{if } n \text{ is even}
        \end{cases}
    \]
\end{lemma}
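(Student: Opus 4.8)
The plan is to establish the two things the construction is claimed to deliver: that $H$ has the stated number of \hes, and that $H$ is \nontwo. First I would check $n$-uniformity (every \he\ in (i)--(iv) has exactly $n$ vertices) and then reformulate the structural \hes\ (ii)--(iv) as \emph{transversals}. Writing $S_A=\{j:a_j\text{ is red}\}$ and $S_B=\{j:b_j\text{ is red}\}$ for a given \twocoloring, the transversal indexed by $P\subseteq\{1,\dots,n\}$ is the \he\ that takes $a_j$ for $j\in P$ and $b_j$ for $j\notin P$. For $p=1$ the families (ii), (iii) and (iv) reproduce exactly the Abbott--Hanson--Toft structural \hes, namely all transversals $P$ with $|P|$ odd when $n$ is odd, of which there are $2^{n-1}$ (sizes $\le\lfloor n/2\rfloor$ come from (ii), the large odd sizes from (iii) via $P=\overline K$, and $P=\{1,\dots,n\}$ from (iv)).

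For the count I would analyse how the swap of $b_1$ and $b_p$ acts. A direct check shows $B_K^p=B_{K'}$ with $K'=(K\setminus\{1\})\cup\{p\}$ when exactly one of $1,p$ lies in $K$, and $B_K^p=B_K^1$ otherwise; hence for fixed $K$ the number of distinct sets $B_K^p$ is $1+(n-|K|)$ if $1\in K$ and $1+|K|$ if $1\notin K$. Summing these multiplicities over the admissible $K$, and checking that families (ii), (iii), (iv) share no \he\ for odd $n$ (since the $A$-part of a type-(ii) \he\ has size $|K|\le\lfloor n/2\rfloor$ while that of a type-(iii) \he\ has size $\ge\lceil n/2\rceil$, regardless of $p$), yields $(n+1)2^{n-2}$ structural \hes; adding the $(n-1)m_c$ \hes\ of type (i) gives the odd-$n$ bound. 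I would sanity-check the arithmetic at $n=5$, where the structural total is $6\cdot 2^{3}=48$. The even case follows the same scheme but needs extra care at the middle layer $|K|=n/2$, where the transversal admits two descriptions; this is the source of the $\binom{n}{n/2}/2$ term and of the $(n-1)\binom{n-2}{(n-2)/2}$ correction attached to $m(n-2)$.

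For the non-$2$-colorability of $H$ I would argue by contradiction, assuming a proper \twocoloring. Since being \mono\ is invariant under swapping the two colors, I may assume without loss of generality that the \mono\ \he\ guaranteed by the core \hg\ $H_c$ is red. The type-(i) \hes\ then force $S_A\cap S_B\subseteq\{1\}$ (for $j\ge2$, not both $a_j,b_j$ red), and type (iv) forces $1\le|S_A|\le n-1$. In the transversal language a $p=1$ \he\ for $P$ is red-\mono\ iff $(S_A,S_B)=(P,\overline P)$ and blue-\mono\ iff $S_B\subseteq P\subseteq\overline{S_A}$. If $a_1,b_1$ are not both red then $S_A\cap S_B=\emptyset$, and the Abbott--Hanson--Toft argument applies verbatim to the $p=1$ \hes: either an odd-sized transversal with $S_B\subseteq P\subseteq\overline{S_A}$ is blue-\mono, or, in the exceptional complementary case $S_B=\overline{S_A}$, the transversal $P=S_A$ (which then has odd size) is red-\mono.

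The real work is the remaining case $a_1,b_1$ both red, i.e. $S_A\cap S_B=\{1\}$: here index $1$ is red on both sides, so every $p=1$ \he\ fails both \mono\ conditions, which is precisely the loophole opened by dropping the $j=1$ \hes\ of type (i). I would close it with the $p\ne1$ \hes. Let $D$ be the set of indices blue on both sides. A $p$-\he\ with $1\in K$, $p\notin K$ contains both (red) $a_1,b_1$ and is a transversal elsewhere, hence red-\mono\ once $D\setminus\{p\}\subseteq S_A$, which is arrangeable exactly when $|D|\le1$ (choose $p$ to absorb the lone doubly-blue index); a $p$-\he\ with $1\notin K$, $p\in K$ instead omits $a_1,b_1$ and doubles $a_p,b_p$, so for $p\in D$ it is blue-\mono\ whenever $|D|\ge1$. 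Thus for $|D|\le1$ a red transversal-type $p$-\he\ works and for $|D|\ge2$ a blue one does, while the freedom to shrink or grow $K$ inside $S_A$ (respectively $\overline{S_A}$) lets me meet the required size-parity window of family (ii) or (iii). The main obstacle is exactly this size-parity bookkeeping in the last case: I must confirm that for every admissible coloring the needed $K$ lands in an allowed range of one of the two structural families (and, for even $n$, that the middle layer is covered), so that the $p$-swap \hes\ genuinely plug the gap left by the missing $j=1$ \hes.
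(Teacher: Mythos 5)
You should first note that the paper does not actually prove this lemma: it only reproduces the construction and the \he\ count from Mathews et al.\ \cite{mathews2015construction} and cites that paper for the proof that $H$ is \nontwo. So any complete argument you give is necessarily ``different from the paper's.'' Your counting part is sound: the multiplicity of a fixed $K$ under the swaps ($1+(n-\card{K})$ or $1+\card{K}$ according to whether $1\in K$), summed over the admissible $K$, does give $(n+1)2^{n-2}$ structural \hes\ for odd $n$ (each layer contributes $2k\binom{n-1}{k}+\binom{n}{k}$, and these sum correctly), and the same computation for even $n$ produces exactly the $\binom{n}{n/2}/2$ and $(n-1)\binom{n-2}{(n-2)/2}$ terms. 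Your reduction of the colorability argument to transversals, and the observation that the only genuinely new \hes\ are those that skip one index and double the other within $\{1,p\}$, is also the right skeleton.

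The gap is in the case you yourself flag as ``the real work,'' and your sketch of how to close it is not just incomplete but inaccurate in two places. First, the dichotomy ``for $\card{D}\le 1$ a red $p$-swap \he\ works, for $\card{D}\ge 2$ a blue one does'' is false: when $\card{D}=1$ the swap index $p$ is forced to be the unique doubly-blue index and the set $K$ is then forced to equal $S_A$ exactly (any $j\in S_A\setminus K$ with $j\ne 1,p$ would contribute a blue $b_j$), so there is no ``freedom to shrink or grow $K$'' at all in the red case; if $\card{S_A}$ is even, \emph{no} red candidate has admissible size--parity, and the coloring is killed only by a \emph{blue} skip-$1$-double-$p$ \he\ (whose forced size is $n-\card{S_A}$ or $\card{S_A}+\card{D'}$ and which does land in an allowed window, but this has to be checked range by range against $\floor{n/2}$). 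Second, when $\card{D}=0$ the plain $p=1$ transversals already contain a red \monohe\ (any $P$ with $S_A\setminus\{1\}\subseteq P\subseteq S_A$ works, and the two sizes $\card{S_A}-1,\card{S_A}$ supply both parities), so no swap \hes\ are needed there; conflating this with the $\card{D}=1$ case hides where the parity adjustment actually comes from. Finally, the even-$n$ case is dismissed in one clause, but there the $p=1$ transversals available are only those with $\card{P}$ odd and $\card{P}\le n/2$ or $\card{P}$ even and $\card{P}\ge n/2$, so even your Case A (``AHT applies verbatim'') needs a separate argument. Until the size--parity bookkeeping is carried out for every value of $\card{S_A}$, $\card{D}$ and both parities of $n$, the non-$2$-colorability claim is not established.
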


This result improved the upper bounds on \mninline[13]\ and \mninline[17]\ to
$m(13) \leq 357892$ and $m(17) \leq 14304336$, respectively. However, Mathews et al.
modified the above construction in the same paper to provide another construction that 
gives the following result. 

\begin{lemma}
  \[
        m(n) \leq 
        \begin{cases}
            (n + 4)2^{n-3} + (n - 2) \cdot m(n - 2) & \text{if } n \text{ is
                odd} \\
            (n + 4)2^{n-3} + (n - 2) \cdot m(n - 2) + n \binom{n-2}{(n-2)/2}/2 +
            \binom{n}{n/2}/2 & \text{if } n \text{ is even}
        \end{cases}
    \]
\end{lemma}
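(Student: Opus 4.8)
The plan is to establish the three standard ingredients: $H$ is $n$-uniform, $H$ has the claimed number of hyperedges, and $H$ is non-$2$-colorable. Uniformity I would check family by family: types (i) and (v) contribute $(n-2)+1+1=n$ vertices; types (iii),(iv),(vi),(vii) each pair a set of size $|K|$ (or $|L|$) with its complement inside $B$ (resp.\ $B'$) of size $n-|K|$ (resp.\ $(n-2)-|L|$), so after adding the two extra vertices in the primed families the total is again $n$; and type (ii) is $A$ with $|A|=n$. For the count I would group the edges as (i), giving $(n-2)\,m(n-2)$; the block (ii)+(iii)+(iv) on $A\cup B$; and the block (v)+(vi)+(vii) on $A'\cup B'$. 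Absorbing type (ii)$=A$ as the $\binom{n}{0}$ term, the first block telescopes to $\sum_{k=0}^{\lfloor n/2\rfloor}\binom{n}{k}$, which is $2^{n-1}$ for odd $n$ and $2^{n-1}+\binom{n}{n/2}/2$ for even $n$; likewise the second block is $n\sum_{k=0}^{\lfloor (n-2)/2\rfloor}\binom{n-2}{k}$, and using $2^{n-1}+n\,2^{n-3}=(n+4)2^{n-3}$ I recover exactly the totals stated in the paragraph preceding the lemma (so the even-case exponent in the displayed formula should read $2^{n-3}$, not $2^{n-2}$).

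The substance is non-$2$-colorability, which I would prove by contradiction, organised as a \emph{two-level} reuse of the parity argument underlying Lemma~\ref{prev-result-abbott-hanson-toft-lemma}. Fix a putative proper $2$-coloring of $V$. Since $H_c$ is non-$2$-colorable, its restriction to $V_c$ contains a monochromatic core edge $e_i$; applying the global red/blue swap if necessary, I may assume $e_i$ is all red. The type (i) edges $e_i\cup\{a'_j\}\cup\{b'_j\}$ then force each pair $\{a'_j,b'_j\}$ to carry a blue vertex, since otherwise that edge is all red. This is the ``core constraint'' for the lower level.

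At the lower level I would isolate the following parity statement from the Abbott--Hanson--Toft analysis and apply it to $A',B'$ (each of size $n-2$): if no pair $\{a'_j,b'_j\}$ is monochromatic in the forbidden color, then in any $2$-coloring at least one of $A'$, the odd sets $A'_L\cup\overline{B}'_L$, or the even sets $\overline{A}'_L\cup B'_L$ (over the stated range of $|L|$) is monochromatic. In $H$ each of these sets appears only after being thickened by a pair $\{a_i,b_i\}$ — the set $A'$ through type (v), the parity sets through types (vi)/(vii) — and for \emph{every} $i\in\{1,\dots,n\}$. Hence a monochromatic $A',B'$-set of color $\gamma$ forces every pair $\{a_i,b_i\}$ to contain a vertex of the opposite color, for otherwise the corresponding thickened edge is monochromatic. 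This yields a uniform ``no pair $\{a_i,b_i\}$ is monochromatic-$\gamma$'' condition on $A\cup B$. Applying the same parity statement at the upper level to $A,B$ (size $n$), this condition is exactly the hypothesis needed to conclude that one of $A$, the odd sets $A_K\cup\overline{B}_K$, or the even sets $\overline{A}_K\cup B_K$ is monochromatic; but these are precisely the edges of types (ii),(iii),(iv), so $H$ has a monochromatic edge, contradicting properness.

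The hard part will be making this parity statement genuinely reusable across the two levels. First, it must be established for \emph{both} forbidden colors: the monochromatic $A',B'$-set produced at the lower level can be all-blue even though the core edge was taken red, so the induced condition on the $\{a_i,b_i\}$ pairs may forbid either color, and the upper-level argument must accommodate $\gamma=\text{red}$ and $\gamma=\text{blue}$ symmetrically despite the asymmetry introduced by the all-$A$ edge. Second, at the lower level the all-$A'$ set occurs only in thickened form (type (v)), so the argument must split into the case where $A'$ is monochromatic — where the constraint is delivered by type (v) — and the case where $A'$ is mixed — where one must show a thickened parity edge of type (vi)/(vii) is monochromatic. Aligning these color orientations with the monochromatic/mixed dichotomy for $A'$ is the delicate step; by comparison the uniformity and counting are routine.
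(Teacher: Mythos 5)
Your proposal is sound, but note that the paper does not actually prove this lemma: it is one of the previously known results of Mathews et al.\ \cite{mathews2015construction}, and the paper only presents the construction, counts the hyperedges, and cites that reference for non-2-colorability. So you are supplying a proof where the paper supplies none. Your counting agrees with the paper's own count in the paragraph preceding the lemma, and you are right that the even case of the displayed bound should read $(n+4)2^{n-3}$ rather than $(n+4)2^{n-2}$ (the odd-case instances $m(11)\leq 15\cdot 2^8+9\,m(9)$ and $m(13)\leq 17\cdot 2^{10}+11\,m(11)$ confirm the $2^{n-3}$ normalization). Your two-level architecture is the right one, and the ``delicate step'' you flag does go through: the reusable statement is that if no pair $\{a_j,b_j\}$ is monochromatic in a fixed color $\gamma$, then one of $A$, $A_K\cup\overline{B}_K$ ($\card{K}$ odd), $\overline{A}_K\cup B_K$ ($\card{K}$ even) is monochromatic, and this holds for \emph{both} choices of $\gamma$ despite the asymmetric all-$A$ edge. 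To see it, let $T$ be the index set of $\gamma'$-colored vertices of $A$ (where $\gamma'$ is the other color) and $S\subseteq T$ the $\gamma'$-matching pairs; the hypothesis forces $b_j$ to be $\gamma'$ for $j\notin T$, so a monochromatic-$\gamma'$ edge of the first kind exists whenever some odd $\card{K}$ with $T\setminus S\subseteq K\subseteq T$ lies in $[1,\floor{n/2}]$, and of the second kind whenever some even $\card{K}$ with $\overline{T}\subseteq K\subseteq\overline{T}\cup S$ lies in $[2,\floor{n/2}]$; a short case analysis on $\card{T\setminus S}$ and $\card{\overline{T}}$ (using that one of $\card{T}$, $n-\card{T}$ is at most $\floor{n/2}$, with the extremes $T=\emptyset$ and $T=\{1,\dots,n\}$ caught by the edge $A$) shows one of these always fires, and the analysis depends only on the index sets, not on which color plays the role of $\gamma$. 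This is exactly the content of the Abbott--Hanson--Toft argument (the ``no matching pair at all'' case is Lemma \ref{lem-const-one-no-matching} with $k=1$, and the matching-pair case is the parity-shift in the proof of Result \ref{result-gen-abt}), so your plan reduces to two invocations of an argument already verified elsewhere in the paper. Your worry about splitting on whether $A'$ is monochromatic versus mixed is unnecessary: whichever of the three set types the lower-level lemma produces, it occurs in $H$ only thickened by $\{a_i,b_i\}$ for every $i$, so the conclusion ``no pair $\{a_i,b_i\}$ is monochromatic in $\gamma$'' follows uniformly.
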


This construction improved the upper bound on \mninline[11]\ to $m(11) \leq
25449$ and further improved the above-mentioned upper bounds on \mninline[13]\ 
and \mninline[17]\ to $m(13) \leq 297347$ and $m(17) \leq 13201419$, respectively.

\section{Generalized Abbott-Hanson-Toft Construction \label{upper-const-one}} 

For any $k \geq 1$, we construct a \nontwocolornhg\ \hgnot\ for an integer $n$
satisfying $n > 2k$.  This construction uses a \nontwocolornhg[(n - 2k)]\
\cushgnot{c} with \hyperedgeset{E_c}{m_c}. Consider two disjoint sets of
vertices \vertexset{A}{a}{n+k-1} and \vertexset{B}{b}{n+k-1}, each disjoint with
$V_c$.  Let us define $\mathcal{I}_i$ to be the collection of all $i$-element
subsets of the set $\mathcal{N} = \{1, 2, \ldots, n + k - 1\}$. For an $I \in
\mathcal{I}_{k-1}$, consider $K_I \subseteq \mathcal{N} \setminus I$. We define
$A_I = \bigcup_{i \in I} \{a_i\}$, $B_I = \bigcup_{i \in I} \{b_i\}$, $A_{K_I} =
\bigcup_{i \in K_I} \{a_i\}$, $B_{K_I} = \bigcup_{i \in K_I}\{b_i\}$,
$\overline{A}_{K_I} = A \setminus (A_{K_I} \cup A_I)$ and $\overline{B}_{K_I} =
B \setminus (B_{K_I} \cup B_I)$.

The \nontwocolornhg\ \hgnot\ is constructed with the vertex set $V = V_c \cup A \cup
B$. The \he\ set $E$ consists of the following \hes:
\begin{enumerate}
    \item[(i)] $e_i \cup A_I \cup B_I$ for each $I \in \mathcal{I}_k$ and all $i$
        satisfying $1 \leq i \leq m_c$
    \item[(ii)] $A_{K_I} \cup \overline{B}_{K_I}$ for each $K_I$ such that
        \cardinline{K_I} is odd and $1 \leq \card{K_I} \leq \floor{n/2}$, for
        each $I \in \mathcal{I}_{k-1}$
    \item[(iii)] $\overline{A}_{K_I} \cup B_{K_I}$ for each $K_I$ such that
        \cardinline{K_I} is even and $0 \leq \card{K_I} \leq \floor{n/2}$, for
        each $I \in \mathcal{I}_{k-1}$
\end{enumerate}

The number of \hes\ in $H$ is $\binom{n+k-1}{k} m_c + \binom{n+k-1}{k-1}
2^{n-1}$ when $n$ is odd and $\binom{n+k-1}{k} m_c + \binom{n+k-1}{k-1}
\big( 2^{n-1} + \binom{n}{n/2}/2 \big)$ when $n$ is even. 

For a \twocoloring\ of $H$, we call \matchingv\ to be a \itt{matching pair} if
both the vertices are colored by the same color. For a given $I \in
\mathcal{I}_{k-1}$, we define $A_{{blue}_I}$ to be the set blue vertices in $A
\setminus A_I$, and $B_{{blue}_I} = \{b_i :  a_i \in A_{{blue}_I}\}$.  Let
$\overline{A}_{{blue}_I} = A \setminus (A_I \cup A_{{blue}_I})$ and
$\overline{B}_{{blue}_I} = \{b_i : a_i \in \overline{A}_{{blue}_I}\}$.

\begin{lemma}
    \label{lem-const-one-no-matching}
    Consider any $I \in \mathcal{I}_{k-1}$. If there is no matching pair of vertices
    between $A \setminus A_I$ and $B \setminus B_I$ in a \twocoloring\ \colorinline\ of
    \hg\ $H$, then there exists at least one \monohe\ in the coloring \colorinline.
\end{lemma}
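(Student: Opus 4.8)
The plan is to use the no-matching hypothesis to determine the colors of all the $b$-vertices from the colors of the $a$-vertices, and then to exhibit a monochromatic hyperedge from family (ii) or (iii) by taking $K_I$ to be exactly the blue (resp. red) indices among $A \setminus A_I$. First I would record what the hypothesis gives. Fix $I \in \mathcal{I}_{k-1}$ and suppose $\chi$ has no matching pair between $A \setminus A_I$ and $B \setminus B_I$; then for every $i \in \mathcal{N} \setminus I$ the vertices $a_i$ and $b_i$ receive opposite colors. Reading off the definitions of $A_{{blue}_I}, B_{{blue}_I}, \overline{A}_{{blue}_I}, \overline{B}_{{blue}_I}$, this forces $A_{{blue}_I}$ to be entirely blue while $B_{{blue}_I}$ is entirely red, and $\overline{A}_{{blue}_I}$ to be entirely red while $\overline{B}_{{blue}_I}$ is entirely blue. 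Writing $t = \card{A_{{blue}_I}}$ and noting that $\card{A \setminus A_I} = (n+k-1)-(k-1) = n$, we obtain $\card{\overline{A}_{{blue}_I}} = n - t$.

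The key observation is then that choosing $K_I$ to be the set of blue indices (so that $A_{K_I} = A_{{blue}_I}$, $B_{K_I} = B_{{blue}_I}$, $\overline{A}_{K_I} = \overline{A}_{{blue}_I}$ and $\overline{B}_{K_I} = \overline{B}_{{blue}_I}$) makes the family-(ii) edge $A_{K_I} \cup \overline{B}_{K_I}$ equal to $A_{{blue}_I} \cup \overline{B}_{{blue}_I}$, which by the previous paragraph is entirely blue, while the family-(iii) edge $\overline{A}_{K_I} \cup B_{K_I}$ equals $\overline{A}_{{blue}_I} \cup B_{{blue}_I}$, which is entirely red. Thus such a $K_I$ produces a \monohe, provided its size meets the parity and range conditions imposed on families (ii) and (iii).

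It remains to verify that one of the two natural choices of $K_I$ always lands inside the allowed window. Since $t + (n-t) = n$, we have $\min(t, n-t) \leq \lfloor n/2 \rfloor$. If $t \leq \lfloor n/2 \rfloor$, take $K_I$ to be the blue indices: when $t$ is odd the bound $1 \leq t \leq \lfloor n/2 \rfloor$ matches family (ii) and yields the all-blue edge, while when $t$ is even the bound $0 \leq t \leq \lfloor n/2 \rfloor$ matches family (iii) and yields the all-red edge (the case $t = 0$ being covered precisely because family (iii) permits $\card{K_I} = 0$). If instead $t > \lfloor n/2 \rfloor$, then $n - t \leq \lfloor n/2 \rfloor$, and the same argument applied to the red indices, of size $n-t$, gives an all-red edge from family (ii) when $n-t$ is odd and an all-blue edge from family (iii) when $n-t$ is even. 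In every case a \monohe\ exists under $\chi$, which is exactly the claim.

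The main obstacle I expect is purely the bookkeeping: tracking correctly which of $A_{{blue}_I}, \overline{B}_{{blue}_I}, \overline{A}_{{blue}_I}, B_{{blue}_I}$ is blue versus red under the hypothesis, and aligning the size windows $1 \leq \card{K_I} \leq \lfloor n/2 \rfloor$ (odd) in (ii) and $0 \leq \card{K_I} \leq \lfloor n/2 \rfloor$ (even) in (iii) with the two candidate sizes $t$ and $n-t$. The single clean fact that makes everything close is $\min(t, n-t) \leq \lfloor n/2 \rfloor$, which guarantees at least one candidate falls inside the allowed range, with its parity then selecting the family and the resulting monochromatic color.
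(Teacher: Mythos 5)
Your proof is correct and follows essentially the same route as the paper's: both determine the colors of the $b$-vertices from the no-matching hypothesis and then exhibit a monochromatic hyperedge of type (ii) or (iii) by case analysis on $t=\card{A_{{blue}_I}}$ relative to $\floor{n/2}$ and on parity. Your version is if anything slightly tidier, since you absorb the boundary cases $t=0$ and $t=n$ into the even-parity subcases via the $\card{K_I}=0$ edge of family (iii), where the paper treats them as a separate third case.
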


\begin{proof}
    Assume for the sake of contradiction that \colorinline\ is a proper
    \twocoloring\ of \hg\ $H$ with no matching pair of vertices between $A
    \setminus A_I$ and $B \setminus B_I$.  We arrive at a contradiction in each
    of the cases below.
    \begin{enumerate}
        \item[Case $1.$] $1 \leq \card{A_{{blue}_I}} \leq \floor{n/2}$ 

            If \cardinline{A_{{blue}_I}} is odd, the \he\ $A_{{blue}_I} \cup
            \overline{B}_{{blue}_I}$ is \mono\ in blue.

            If \cardinline{A_{{blue}_I}} is even, the \he\
            $\overline{A}_{{blue}_I} \cup B_{{blue}_I}$ is \mono\ in red. 

        \item[Case $2.$] $\floor{n/2} < \card{A_{{blue}_I}} < n$

            If $n - \card{A_{{blue}_I}}$ is odd, the \he\
            $\overline{A}_{{blue}_I} \cup B_{{blue}_I}$ is \mono\ in red.

            If $n - \card{A_{{blue}_I}}$ is even, the \he\ $A_{{blue}_I} \cup
            \overline{B}_{{blue}_I}$ is \mono\ in blue. 

        \item[Case $3.$] $\card{A_{{blue}_I}} = 0$ or $\card{A_{{blue}_I}} = n$
            
            If $\card{A_{{blue}_I}} = 0$, $A \setminus A_I$ is \mono\ in red. If
            $\card{A_{{blue}_I}} = n$, $A \setminus A_I$ is \mono\ in blue.
    \end{enumerate}
\end{proof}

\begin{proof}[Proof of Result \ref{result-gen-abt}]
    \label{the-const-one-final}
    Let us assume for the sake of contradiction that there exists a proper
    \twocoloring\ \colorinline\ for \hg\ $H$. We know that the core \hg\ $H_c$
    is a \nontwocolornhg[(n-2k)] and thus has a \monohe\ in the coloring
    \colorinline.  Without loss of generality, assume $H_c$ to be \mono\ in red.
    The \hes\ added in Step (i) ensure that no more than $(k-1)$ matching pairs
    of red vertices exist in \colorinline. This implies that there exists
    an $I \in \mathcal{I}_{k-1}$ such that there is no matching pair of red
    vertices from $A' = A \setminus A_I$ and $B' = B \setminus B_I$. As a
    result, it follows from Lemma \ref{lem-const-one-no-matching} that there
    exists at least one matching pair of blue vertices from $A'$ and $B'$. Let
    \matchingv[p]\ be such a matching pair of blue vertices, where $a_p \in A'$
    and $b_p \in B'$.  This leads to a contradiction in each of the following
    cases.
    
    \begin{enumerate}
        \item[Case $1.$] $1 \leq \card{A_{{blue}_I}} \leq \ceil{n/2}$ 

            If $\card{A_{{blue}_I}}$ is odd, the \he\ $A_{{blue}_I} \cup
                \overline{B}_{{blue}_I}$ is \mono\ in blue.

            If $\card{A_{{blue}_I}}$ is even, the \he\ $\overline{B}_{{blue}_I}
            \cup \{b_p\} \cup A_{{blue}_I} \setminus \{a_p\}$ is \mono\ in blue. 

        \item[Case $2.$] $\ceil{n/2} < \card{A_{{blue}_I}} < n$

            If $n - \card{A_{{blue}_I}}$ is odd, $\card{\overline{B}_{blue_I} \cup
                \{b_p\}}$ is even. Therefore, the \he\ $\overline{B}_{{blue}_I} \cup
            \{b_p\} \cup A_{{blue}_I}  \setminus \{a_p\}$ is \mono\ in blue.

            If $n - \card{A_{{blue}_I}}$ is even, the \he\ $A_{{blue}_I} \cup
            \overline{B}_{{blue}_I}$ is \mono\ in blue. 

        \item[Case $3.$] $\card{A_{{blue}_I}} = n$
            
            If $\card{A_{{blue}_I}} = n$, $A'$ is \mono\ in blue.
    \end{enumerate}

    This completes the proof that $H$ is \nontwo. Therefore, we arrive at the
    following result.
    \[
        m(n) \leq 
        \begin{cases}
            \binom{n+k-1}{k} m(n-2k) + \binom{n+k-1}{k-1}2^{n-1} & \text{if } n
            \text{ is odd} \\
            \binom{n+k-1}{k} m(n-2k) + \binom{n+k-1}{k-1}\Big(2^{n-1} +
            \binom {n}{n/2}/2\Big) & \text{if } n \text{ is even}
        \end{cases}
    \]
\end{proof}

We set $k = 2$ in Result \ref{result-gen-abt} and use $m(9) \leq 2401$ from
Table \ref{table-cur-known-upper} to get an improvement of the upper bound on
\mninline[13]\ to $m(13) \leq \binom{14}{2} m(9) + \binom{14}{1} 2^{12} =
\mthirteengenabtimprov$.

\subsubsection{Optimization of \mninline}

From the construction above, we obtain the following for any integer $n$ greater than a
large constant $n_0 > 0$. 
\begin{align*}
    m(n) &\leq \binom{n+k-1}{k} m(n-2k) + \binom{n+k-1}{k-1}\bigg(2^{n-1} +
    \binom {n}{\lfloor n/2 \rfloor}/2\bigg) \\
    &\leq \binom{n+k-1}{k} m(n-2k) + \binom{n+k-1}{k-1} 2^n \\
    \shortintertext{Let $k = np$, where $\frac{1}{n} \leq p < 0.5$. Therefore,}
    m(n) &\leq \binom{n+np-1}{np} m(n-2np) + \binom{n+np-1}{np-1} 2^n \\
    &\leq \binom{n+np}{np} m(n-2np) + \binom{n+np}{np} 2^n. \\
    \shortintertext{Applying $\binom{n}{k} < (\frac{en}{k})^k$ and setting $n$ such
    that $n{(1-2p)}^i$ is an integer for all $i$ in the range $1 \leq i \leq s$, 
    we obtain the following.}
    m(n) &< \bigg(\frac{e(n+np)}{np}\bigg)^{np} \Big(m(n-2np) + 2^n\Big) \\
    &< \bigg(\frac{e(1+p)}{p}\bigg)^{np} \Bigg[2^n +
    \bigg(\frac{e(1+p)}{p}\bigg)^{(n-2np)p} \Bigg( m(n(1-2p)^2) + 2^{n-2np}\Bigg)
    \Bigg] \\ 
    &\hspace{2cm} \vdots \\
    &< m(n(1-2p)^s) \bigg(\frac{e(1+p)}{p}\bigg)^{\sum_{i=0}^{s-1} np(1-2p)^i}
    \\ & \hspace{1cm} +  \sum_{i=0}^{s-1} 2^{n(1-2p)^i}
    \bigg(\frac{e(1+p)}{p}\bigg)^{\sum_{j=0}^{i} np(1-2p)^j}\\
    &= m(n(1-2p)^s) \bigg(\frac{e(1+p)}{p}\bigg)^{\frac{n}{2}(1-(1-2p)^s)} \\ 
    & \hspace{1cm} + \sum_{i=0}^{s-1} 2^{n(1-2p)^i} 
    \bigg(\frac{e(1+p)}{p}\bigg)^{\frac{n}{2}(1-(1-2p)^{(i+1)})} \\
    &\leq m(n(1-2p)^s) \bigg(\frac{e(1+p)}{p}\bigg)^{\frac{n}{2}} \\
    & \hspace{1cm} + \bigg(\frac{e(1+p)}{p}\bigg)^{\frac{n}{2}} \sum_{i=0}^{s-1}
    \Bigg(\frac{2}{\Big(\frac{e(1+p)}{p}\Big)^{(1-2p)/2}}\Bigg)^{n(1-2p)^i}\\
    \shortintertext{For any integer $c > 0$, we observe that there exists a
        constant $c' > \frac{\ln n - \ln c}{2 \ln n}$ such that $n(1-2p)^s < c$ 
        for $s \geq c'n\ln n$. Using $s = c' n\ln n$ in the above equation, we have}
    m(n) &\leq \bigg(\frac{e(1+p)}{p}\bigg)^{\frac{n}{2}} \Bigg[ m(c) +
    \sum_{i=0}^{c'n\ln n - 1}
    \Bigg(\frac{2}{\Big(\frac{e(1+p)}{p}\Big)^{(1-2p)/2}}\Bigg)^{n(1-2p)^i} \Bigg]. \\
    \shortintertext{We observe that
        $2 \Big(\big(\frac{e(1+p)}{p}\big)^{(1-2p)/2}\Big)^{-1}$ increases if $p$
        increases and its value is less than 1 for $0 < p \leq 0.2381$. Using
        $p=0.2381$ in the above equation, we obtain}
    m(n) &< \Big(\frac{e(1+p)}{p}\Big)^{\frac{n}{2}} \big[ m(c) +
    c' n \ln n \big]\\
    &= O(3.7596^n \cdot n \ln n)\\
    &= O(3.76^n).
\end{align*}

\section{Multi-Core Construction \label{upper-const-two}}

Consider an integer $k$ satisfying $0 < k < n$. We define $w = \floor{n/k}, x = n \mod k, 
y = \floor{k/x}$ and $z = k \mod x$. A multi-core construction makes use of a
\nontwocolornhg[(n-k)]\ \cushgnot{c}, a total of $w$ identical \nontwocolornhgs[k]\
$\cushg{1}, \ldots, \cushg{w}$ and a total of $y$ identical \nontwocolornhgs[x]\
$\cushgprime{1}, \ldots, \cushgprime{y}$. The vertex sets of the \hgs\ $H_c$,
$H_1, \ldots, H_w$, $H'_1, \ldots, H'_y$ are pairwise disjoint. Let us denote
\hyperedgecustset{E_c}{e}{m_c}, \hyperedgecustset{E_1}{e^1}{m_k}, $\ldots$,
\hyperedgecustset{E_w}{e^w}{m_k}, \hyperedgecustset{E'_1}{e'^1}{m_x}, $\ldots$,
\hyperedgecustset{E'_y}{e'^y}{m_x}.  Consider a vertex set
\vertexset{A}{a}{x+z-1}, disjoint with each of $V_c$, $V_1, \ldots, V_w$, $V'_1,
\ldots, V'_y$. We define $\mathcal{A}_p$ as the collection of all $p$-element
subsets of the vertex set $A$. Let $\mathcal{E} = \{j_1 \cup j_2 \cup \cdots
\cup j_w : (j_1, j_2, \ldots, j_w) \in E_1 \times E_2 \times \cdots \times
E_w\}$ and $\mathcal{E'} = \{j'_1 \cup j'_2 \cup \cdots \cup j'_y : (j'_1, j'_2,
\ldots, j'_y) \in E'_1 \times E'_2 \times \cdots \times E'_y\}$. 

We define the construction of a \nontwocolornhg\ \hgnot\ as follows. The vertex
set is $V = V_c \cup A \cup V_1 \cup \cdots \cup V_w \cup V'_1 \cup \cdots \cup
V'_y$. The construction of the \hes\ belonging to $E$ depends on the values of
$x$ and $z$ as follows. 

\begin{enumerate}
    \item[Case $1.$] For $x > 0$ and $z > 0$, $E$ contains the following \hes.

    \begin{enumerate}
        \item[(i)] $e_i \cup e_j^l$ for every triple $i, j, l$ satisfying $1 \leq i \leq
            m_c$, $1 \leq j \leq m_k$ and $1 \leq l \leq w$ \label{mul-core-step-1}
        \item[(ii)] $e'^j_i \cup e$ for every triple $i, j, e$ satisfying $1 \leq i \leq
            m_x$, $1 \leq j \leq y$ and $e \in \mathcal{E}$ \label{mul-core-step-2}
        \item[(iii)] $e_i \cup e' \cup S $ for every triple $i, e, S$ satisfying $1 \leq i
            \leq m_c$, $e' \in \mathcal{E'}$ and $S \in \mathcal{A}_z$
            \label{mul-core-step-3}
        \item[(iv)] $e \cup S$ for every pair $e, S$ satisfying $e \in \mathcal{E}$ and $S
            \in \mathcal{A}_x$ \label{mul-core-step-4}
    \end{enumerate}

    \item[Case $2.$] For $x > 0$ and $z = 0$, $E$ contains the following \hes.

    \begin{enumerate}
        \item[(i)] $e_i \cup e_j^l$ for every triple $i, j, l$ satisfying $1 \leq i \leq
            m_c$, $1 \leq j \leq m_k$ and $1 \leq l \leq w$
        \item[(ii)] $e'^j_i \cup e$ for every triple $i, j, e$ satisfying $1 \leq i \leq
            m_x$, $1 \leq j \leq y$ and $e \in \mathcal{E}$
        \item[(iii)] $e_i \cup e'$ for every pair $i, e'$ satisfying $1 \leq i \leq m_c$
            and $e' \in \mathcal{E'}$
    \end{enumerate}

    \item[Case $3.$] For $x = 0$, $E$ contains the following \hes.

    \begin{enumerate}
        \item[(i)] $e_i \cup e_j^l$ for every triple $i, j, l$ satisfying $1 \leq i
            \leq m_c$, $1 \leq j \leq m_k$ and $1 \leq l \leq w$
        \item[(ii)] $e$ for each  $e \in \mathcal{E}$
    \end{enumerate}
\end{enumerate}

\noindent The number of \hes\ in $H$ is given by

\begin{align*}
    \card{E} &= \begin{cases}
        w m_c m_k + y m_x (m_k)^w + \binom{x + z - 1}{z} m_c (m_x)^y +
        \binom{x + z - 1}{x} (m_k)^w & \text{if } x>0, z>0 \\
        w m_c m_k + y m_x (m_k)^w + m_c (m_x)^y & \text{if } x>0, z=0 \\
        w m_c m_k + (m_k)^w  & \text{if } x=0
    \end{cases}
\end{align*}

\begin{proof}[Proof of Result \ref{result-multicore}] 
    \label{the-const-two-final}
    For the sake of contradiction, let us assume that \colorinline\ is a proper
    \twocoloring\ of $H$.  Without loss of generality, let the \hg\ $H_c$ be
    monochromatic in red in the coloring \colorinline.  The \hes\ formed in
    Step (i) in each of the cases ensure that the \hgs\ $H_j$ are monochromatic
    in blue for each $j \in  \{1, \ldots, w\}$.

    \begin{enumerate}
        \item[{Case} 1.] If $x > 0$ and $z > 0$, the \hes\ formed in Step (ii) ensure that
            the \hgs\ $H'_l$ are monochromatic in red for each $l \in \{1, 2,
            \ldots, y\}$.  It can be noted from the \hes\ generated in Step
            (iii) that there are at most $z-1$ red vertices in the
            set $A$. This implies that $A$ has at least $x$ blue vertices.
            The \hes\ formed in Step (iv) ensure that there are at most
            $x-1$ blue vertices in $A$. Thus, we have a contradiction.
        \item[{Case} 2.] If $x > 0$ and $z = 0$, the \hes\ formed in Step (ii) ensure that
            the \hgs\ $H'_l$ are monochromatic in red for each $l \in \{1, 2,
            \ldots, y\}$.  It can be easily noted that the \hes\ generated in
            Step (iii) include a red \monohe. Thus, we have a contradiction. 
        \item[{Case} 3.] If $x = 0$, it immediately follows that we have a blue \monohe\ in
            the \hes\ generated by Step (ii) of the construction.  This leads to
            a contradiction. 
    \end{enumerate}
 
    \noindent Thus, we have the following result on \mninline.
    \begin{align*}
        \shortintertext{If $x > 0$ and $z > 0$,}
        m(n) &\leq w \cdot m(n-k) m(k) + y \cdot m(k)^w m(x) \\ &
        \hspace{1cm} + \tbinom{x+z-1}{z} m(n-k) m(x)^y + \tbinom{x+z-1}{x}
        m(k)^w. \\
        \shortintertext{If $x > 0$ and $z = 0$,}
        m(n) &\leq w \cdot m(n-k) m(k) + y \cdot m(k)^w m(x) + m(n-k) m(x)^y. \\
        \shortintertext{If $x = 0$,}
        m(n) &\leq w \cdot m(n-k) m(k) + m(k)^w.
    \end{align*}
\end{proof} 

These recurrence relations give improvements on \mninline[n]\ for $n = 8, 13,
14, 16$ and $17$ as follows:
\begin{itemize}
    \item For $n = 8$ and $k = 5$, we have $m(8) \leq m(3) m(5) + m(5) m(3) +
        \tbinom{4}{2} m(3) m(3) + \tbinom{4}{3} m(5) \leq \meightmultiimprov$ by
        using $m(3) = 7$ and $m(5) \leq 51$ from Table \ref{table-cur-known-upper}. 
    \item For $n = 13$ and $k = 5$, we obtain $m(13) \leq 2 m(8) m(5) + m(5)^2
        m(3) + \tbinom{4}{2} m(8) m(3) + \tbinom{4}{3} m(5)^2 \leq
        \mthirteenmultiimprov$ by using $m(3) = 7$ and $m(5) \leq 51$ from Table
        \ref{table-cur-known-upper} and $m(8) \leq 1212$ obtained above.
    \item For $n = 14$ and $k = 5$, the recurrence relation gives $m(14) \leq 2 m(9) m(5)
        + m(5)^2 m(4) + \tbinom{4}{1} m(9) m(4) + \tbinom{4}{4} m(5)^2 \leq
        \mforteenmultiimprov$ by using $m(4) = 23$, $m(5) \leq 51$ and $m(9) \leq
        2401$ from Table \ref{table-cur-known-upper}. 
    \item For $n = 16$ and $k = 7$, we have $m(16) \leq 2 m(9) m(7) + 3 m(7)^2
        m(2) + \tbinom{2}{1} m(9) m(2)^3 + \tbinom{2}{2} m(7)^2 \leq
        \msixteenmultiimprov$ by using $m(2) = 3$, $m(7) \leq 421$ and $m(9)
        \leq 2401$ from Table \ref{table-cur-known-upper}. 
    \item Finally, for $n = 17$ and $k = 7$, we obtain $m(17) \leq 2 m(10) m(7)
        + 2 m(7)^2 m(3) + \tbinom{3}{1}m(10) m(3)^2 + \tbinom{3}{3} m(7)^2 \leq
        \mseventeenmultiimprov$ by using $m(3) = 7$, $m(7) \leq 421$ and $m(10)
        \leq 7803$ from Table \ref{table-cur-known-upper}. 
    \item It can also be noted that this construction matches the currently
        \bk\ upper bounds on $m(6)$ and $m(10)$ for $k = 3$ and $k =
        5$, respectively.
\end{itemize}

\section{Block Construction} 
\label{upper-const-five}

For an integer $k > 0$, we describe the construction of a collection
$\mathcal{H}$ of \nontwocolornhgs. Any \hg\ \hgnot\ belonging to this collection
is constructed using a \nontwocolornhg[(n-2k)]\ denoted by \cushgnot{c} and two
disjoint collections of \hgs\ $\mathcal{A}$ and $\mathcal{B}$. Let
\hyperedgecustset{E_c}{e}{m_c}.  Let \vertexset{\mathcal{A}}{H}{t} and
\vertexset{\mathcal{B}}{H'}{t} be the collection of \hgs\ such that each of
\cushgnot{i} and \cushgprimenot{i} is an identical copy of a \nontwo\
\nuniform[k_i]\ \hg\ satisfying $k_i \geq k$ and $\sum_{i=1}^{t} k_i \geq n$.
Note that the sets $V_c, V_1, V_2, \ldots, V_t, V'_1, V'_2, \ldots V'_t$
are pairwise disjoint.

Let $P = \{i_1, i_2, \ldots, i_p\} \subseteq \{1, 2, \ldots, t\}$ such that $1
\leq i_1 < i_2 < \ldots < i_p \leq t$. Using the Cartesian products
$\mathcal{C}_P = E_{i_1} \times E_{i_2} \times \cdots \times E_{i_p}$ and
$\mathcal{C}'_P = E'_{i_1} \times E'_{i_2} \times \cdots \times E'_{i_p}$, let
us define the collection of \hes\ $\mathcal{A}_P$ and $\mathcal{B}_P$ as
$\mathcal{A}_P = \{ j_1 \cup j_2 \cup \cdots \cup j_p : (j_1, j_2, \ldots, j_p)
\in \mathcal{C}_P\}$ and $\mathcal{B}_P =  \{ j'_1 \cup j'_2 \cup \cdots \cup
j'_p : (j'_1, j'_2, \ldots, j'_p) \in \mathcal{C}'_P\}$, respectively. Also, let
$\overline{P} = \{1, 2, \ldots, t\} \setminus P$.

The \hg\ $H$ has the vertex set $V = V_c \cup V_1 \cup \cdots \cup V_t \cup V'_1
\cup \cdots \cup V'_t$ and the \he\ set $E$ is generated from the following
\hes, each containing at least $n$ vertices.
\begin{enumerate}
    \item[(i)] For each $j$ satisfying $1 \leq j \leq t$, $e_i \cup e_{H_j} \cup
        e_{H'_j}$ for every triple $i, e_{H_j}, e_{H'_j}$ satisfying $1 \leq
        i \leq m_c$, $e_{H_j} \in E_j$ and $e_{H'_j} \in E'_j$ 
        \label{const-four-step1} 
    \item[(ii)] For each $P \subset \{1, 2, \ldots, t\}$ such that \cardinline{P} is
        odd and $1 \leq \card{P} \leq \floor{t/2}$,  $e_{H} \cup e_{H'}$ for
        every pair $e_H, e_{H'}$ satisfying $e_{H} \in \mathcal{A}_P$ and $e_{H'}
        \in \mathcal{B}_{\overline{P}}$ \label{const-four-step2}
    \item[(iii)] For each $P \subset \{1, 2, \ldots, t\}$ such that \cardinline{P} is
        even and $0 \leq \card{P} \leq \floor{t/2}$,  $e_{H} \cup e_{H'}$ for
        every pair $e_H, e_{H'}$ satisfying $e_{H} \in \mathcal{A}_{\overline{P}}$
        and $e_{H'} \in \mathcal{B}_P$ \label{const-four-step3}
\end{enumerate}

We select an arbitrary set of $n$ vertices from each of the \hes\ generated
above to form the \he\ set $E$. In case a \he\ is included more than once in $E$
by this process, we keep only one of those to ensure that $E$ is not a
multi-set.  Let us count the number of \hes\ added to the \he\ set $E$. Step
$(i)$ adds at most $\card{E_c} \sum_{i=1}^{t} \card{E_i}
\card{E'_i} = \sum_{i=1}^{t} \card{E_i}^2 \card{E_c}$ \hes, whereas Steps
$(ii)$ and $(iii)$ together add at most 
$\prod_{i=1}^{t} \card{E_i} \big(1 + \binom{t}{1} + \ldots +
\binom{t}{\floor{t/2}} \big)$ \hes. Note that $\card{E} \leq \sum_{i=1}^{t}
\card{E_i}^2 \card{E_c} + 2^{t-1} \prod_{i=1}^{t} \card{E_i}$ when $t$ is odd,
and $\card{E} \leq \sum_{i=1}^{t} \card{E_i}^2 \card{E_c} + \big(2^{t-1} +
\binom{t}{t/2}/2 \big) \prod_{i=1}^{t} \card{E_i}$ when $t$ is even. In the following lemma,
we prove that $H$ is \nontwo\ by showing that any proper 2-coloring of $H$
can be used to obtain a proper 2-coloring of any \nuniform[t]\ \hg\ constructed by Abbott-Hanson-Toft
construction.

\begin{lemma}
    $H$ is \nontwo.
    \label{upper-const-block-lemma}
\end{lemma}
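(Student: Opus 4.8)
The plan is to prove that $H$ is \nontwo\ by a reduction to Lemma \ref{prev-result-abbott-hanson-toft-lemma}: I will show that a proper \twocoloring\ of $H$ would produce a proper \twocoloring\ of a $t$-uniform \hg\ built by the Abbott-Hanson-Toft construction of Section \ref{prev-result-abbott-hanson-toft}, which is impossible. So assume for contradiction that $\chi$ is a proper \twocoloring\ of $H$. Since the core $H_c$ is \nontwo, $\chi$ restricted to $V_c$ contains a \monohe; without loss of generality (swapping colours if needed) assume it is red, and fix a red \monohe\ $e_{i_0}\in E_c$.

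The heart of the argument is to collapse each block to a single vertex of an Abbott-Hanson-Toft \hg. Introduce fresh vertices $a_1,\dots,a_t,b_1,\dots,b_t$ and define a \twocoloring\ $\psi$ by letting $\psi(a_i)$ be red if $H_i$ contains a red \monohe\ under $\chi$ and blue otherwise, and defining $\psi(b_i)$ from $H'_i$ in the same way. The key property I would emphasise and use repeatedly is that each $H_i$ and $H'_i$ is itself \nontwo: hence whenever $\psi(a_i)$ is blue, the copy $H_i$ must actually contain a blue \monohe\ (and symmetrically for $b_i$). Thus the single label $\psi(a_i)$ always \emph{certifies} a genuine \monohe\ of the matching colour sitting inside $H_i$, in either direction.

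Next I would fix any \nontwocolornhg[(t-2)]\ $C$ and let $H^{AHT}$ be the $t$-uniform \hg\ produced from $C$, $A=\{a_1,\dots,a_t\}$ and $B=\{b_1,\dots,b_t\}$ by the Abbott-Hanson-Toft construction, whose \hes\ are exactly its Steps (i)--(iv). I extend $\psi$ to $V_c^{AHT}\cup A\cup B$ by colouring every vertex of $C$ red. By Lemma \ref{prev-result-abbott-hanson-toft-lemma}, $H^{AHT}$ is \nontwo, so this extended colouring has a \monohe\ $f$, and the crux is to pull $f$ back to a \monohe\ of $H$. If $f$ is an Abbott-Hanson-Toft Step (i) \he\ $e\cup\{a_j\}\cup\{b_j\}$, then since $C$ is entirely red it is red \mono, forcing $\psi(a_j)=\psi(b_j)=\text{red}$; choosing red \hes\ of $H_j$ and $H'_j$ together with $e_{i_0}$ yields an all-red \he\ generated in Step~(i) of the present construction (this is exactly why colouring $C$ red is the right choice). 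If $f$ is a Step (ii) \he\ $A_K\cup\overline{B}_K$ that is \mono\ in some colour, then every $a_i$ with $i\in K$ and every $b_i$ with $i\notin K$ carries that colour, so by the certification property each $H_i$ ($i\in K$) and each $H'_i$ ($i\notin K$) contains a \monohe\ of that colour; their union lies in $\mathcal{A}_K\cup\mathcal{B}_{\overline{K}}$, an \he\ generated in Step~(ii) here, and is \mono. The Step (iii) and Step (iv) cases (the latter being $K=\varnothing$, matching Step~(iii) here with $P=\varnothing$) are identical with the roles of $A$ and $B$ interchanged.

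In every case the pulled-back \he\ is \mono\ under $\chi$, and because it is entirely one colour the arbitrary $n$-vertex subset retained for $E$ remains \mono; hence $\chi$ is not proper, a contradiction, and $H$ is \nontwo. The step demanding the most care is this pull-back: one must check that the parities and size ranges on $\card{P}$ in Steps (i)--(iii) of the present construction line up exactly with those on $\card{K}$ in the Abbott-Hanson-Toft \hes, so that each \mono\ \he\ of $H^{AHT}$ corresponds to an \he\ actually present in $E$, and that the certification property really supplies the needed \monohe\ of the correct colour in \emph{both} the red and blue cases.
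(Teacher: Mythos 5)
Your proof is correct and follows essentially the same route as the paper: both collapse each block $H_i$, $H'_i$ (and the core $H_c$) to a single vertex of an Abbott--Hanson--Toft $t$-uniform hypergraph, coloured by the colour of a monochromatic hyperedge that must exist in that block, and derive a contradiction from the non-2-colorability of that hypergraph. The only difference is presentational: the paper asserts that the induced colouring of $H_{AHT}$ is proper, whereas you explicitly pull a monochromatic Abbott--Hanson--Toft hyperedge back to a monochromatic hyperedge of $H$, which is exactly the verification the paper leaves implicit.
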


\begin{proof}
    Consider any \nuniform[t]\ \hg\ \cushgnot{{AHT}} constructed by
    Abbott-Hanson-Toft construction using a \nontwo\ \nuniform[(t-2)]\ core \hg\
    and two disjoint vertex sets $\{p_1, \ldots, p_t\}$ and $\{q_1, \ldots, q_t\}$.
    Assuming for the sake of contradiction that a proper \twocoloring\ exists
    for $H$, we give a proper \twocoloring\ for $H_{AHT}$ as follows. 
    \begin{itemize}
        \item Color all vertices of the \nontwo\ \nuniform[(t-2)]\ core \hg\
            of $H_{AHT}$ with the color of the \monohe\ of $H_c$ used in the
            construction of $H$.
        \item Color each vertex $p_i$ with the color of the \monohe\ of $H_i$
            used in the construction of $H$.
        \item Similarly, color each vertex $q_i$ with the color of the \monohe\ of
            $H'_i$ used in the construction of $H$.
    \end{itemize}

    Since $H_{AHT}$ is \nontwo, we have a contradiction. As a result, we have
    the following recurrence relation.
    \begin{align*}
        m(n) \leq \begin{cases}
            m(n - 2k) \sum_{i=1}^{t} m(k_i)^2  + 2^{t-1} \prod_{i=1}^{t}
              m(k_i) & \text{ if } t \text{ is odd} \\
              m(n - 2k) \sum_{i=1}^{t} m(k_i)^2 + \big(2^{t-1} +
              \binom{t}{t/2}/2 \big) \prod_{i=1}^{t} m(k_i) & \text { if } t
              \text{ is even}
        \end{cases}
    \end{align*}
\end{proof}

Consider the special case when $n = 3k+1$. Setting the values of $t$ and $k_i$'s
as $t = 3$, $k_1 = k + 1$ and $k_2 = k_3 = k$ in this special case, we obtain
the following recurrence relation. 
\begin{align}
    m(3k+1) & \leq m(k+1)^3 + 6 m(k)^2 m(k+1) \label{eq-block-con-gen-3k1}
\end{align}

\noindent We give an improvement of this result below.

\subsubsection*{Modified Block Construction}
Let us first repeat the detailed description for the special case mentioned above,
i.e., the construction of a \nontwocolornhg[(3k+1)]\ \hgnot\ belonging to
$\mathcal{H}$. We construct $H$ using a \nontwocolornhg[(k+1)]\ \cushgnot{c}\ 
along with \nontwocolornhgs[(k+1)]\ \cushgnot{1}, \cushgprimenot{1} 
and \nontwocolornhgs[k]\ \cushgnot{2}, \cushgprimenot{2}, \cushgnot{3},
\cushgprimenot{3}. Note that each $H'_i$ is an identical copy of $H_i$ for $1
\leq i \leq 3$.

For the modified construction described below, we set $H_1$ as the
Abbott-Hanson-Toft construction that uses a \nontwo\ \nuniform[(k-1)] core \hg\
\cushgnot{{1c}} and disjoint vertex sets \vertexset{A}{a}{k+1}, \vertexset{B}{b}{k+1}.
Note that $H'_1$ is not necessarily identical to $H_1$ in this modified block
construction, whereas each $H'_i$ is an identical copy of $H_i$ for $2 \leq i
\leq 3$. 
  
Using the notations introduced above, the vertex set of $H$ is $V = V_c \cup V_{1c}
\cup A \cup B \cup V'_1 \cup V_2 \cup V'_2 \cup V_3 \cup V'_3$. The \he\ set $E$
is generated from the following \hes.

\begin{enumerate}
    \item[(a)] $e_{H_c} \cup e_{H_1} \cup e_{H'_1}$ for every triple $e_{H_c}, e_{H_1},
        e_{H'_1}$ satisfying $e_{H_c} \in E_c$, $e_{H_1} \in E_1$ and
        $e_{H'_1} \in E'_1$ 
    \item[(b)] $e_{H_c} \cup e_{H_2} \cup
        e_{H'_2}$ for every triple $e_{H_c}, e_{H_2}, e_{H'_2}$ satisfying 
        $e_{H_c} \in E_c$, $e_{H_2} \in E_2$ and $e_{H'_2} \in E'_2$ 
    \item[(c)] $e_{H_c} \cup e_{H_3} \cup
        e_{H'_3}$ for every triple $e_{H_c}, e_{H_3}, e_{H'_3}$ satisfying 
        $e_{H_c} \in E_c$, $e_{H_3} \in E_3$ and $e_{H'_3} \in E'_3$ 
    \item[(d)] $e_{H_1} \cup e_{H'}$ for every pair $e_{H_1}, e_{H'}$ satisfying
        $e_{H_1} \in E_1$ and $e_{H'} \in \{j'_2 \cup j'_3 : (j'_2, j'_3) \in
        E'_2 \times E'_3\}$
    \item[(e)] $e_{H_2} \cup e_{H'}$ for every pair $e_{H_2}, e_{H'}$ satisfying
        $e_{H_2} \in E_2$ and $e_{H'} \in \{j'_1 \cup j'_3 : (j'_1, j'_3) \in
        E'_1 \times E'_3\}$
    \item[(f)] $e_{H_3} \cup e_{H'}$ for every pair $e_{H_3}, e_{H'}$ satisfying
        $e_{H_3} \in E_3$ and $e_{H'} \in \{j'_1 \cup j'_2 : (j'_1, j'_2) \in
        E'_1 \times E'_2\}$
    \item[(g)] All elements of the set  $\{j_1 \cup j_2 \cup j_3 :
        (j_1, j_2, j_3) \in E_1 \times E_2 \times E_3\}$
\end{enumerate}

Note that each of the \hes\ formed in Steps (b) to (g) has $3k + 1$ vertices. However,
the \hes\ formed in Step (a) have $3k + 3$ vertices in each of them. We can remove
any two vertices from each of these \hes\ to obtain the following recurrence
relation. Recall that $m_{H_{1c}}(k+1)$ denotes the number of \hes\ in the
\nontwocolornhg[(k+1)]\ constructed by Abbott-Hanson-Toft construction that uses
$H_{1c}$ as its core \hg.
\begin{align}
    m(3k+1) \leq m_{H_{1c}}(k+1)m(k+1)^2 + 2m_{H_{1c}}(k+1)m(k)^2 + 4m(k+1)m(k)^2
    \label{eq-block-con-abt}
\end{align}
Whenever $m(k+1) < m_{H_{1c}}(k+1)$, it is evident that the upper bound on
\mninline[3k+1]\ that this recurrence relation gives is worse than the one given
by Eq. \ref{eq-block-con-gen-3k1}. However, we observe that we can improve Eq.
\ref{eq-block-con-abt} by carefully selecting the two vertices to be removed
from each \he\ formed in Step (a). Recall that each of these \hes\ is a union of
three \hes\ $e_{H_c} \in E_c$, $e_{H_1} \in E_1$ and $e_{H'_1} \in E'_1$.  In
the following paragraph, we describe a process to create a set of $k-1$ vertices
from each \he\ in the \nuniform[(k+1)]\ \hg\ \cushgnot{1}. For each \he\
$e_{H_c} \cup e_{H_1} \cup e_{H'_1}$ formed in Step (a), we use this process to
remove two vertices from $e_{H_1}$.

\noindent Given a \he\ $h \in E_1$, we create a set $h'$ containing $k-1$ vertices 
as follows. 
\begin{enumerate}
    \item[{Case} 1.] If $h$ is created by Step (i) of Abbott-Hanson-Toft construction,
        i.e., if $h$ is of the form $e \cup \{a_i\} \cup \{b_i\}$ for some $e \in
        E_{1c}$, $a_i \in A$ and $b_i \in B$, we define $h' = e$. In other
        words, we remove $a_i$ and $b_i$ from $h$ to create $h'$.
    \item[{Case} 2.] If $h$ is created in Step (ii) of Abbott-Hanson-Toft construction,
        i.e., if $h$ is of the form $A_K \cup \overline{B}_{K}$ for some $K \subset 
        \{1, \ldots, k + 1\}$ such that \cardinline{K} is odd and $1 \leq \card{K}
        \leq \floor{(k+1)/2}$, we define $h' = A_K \cup \overline{B}_{K}
        \setminus \{a_{k}, a_{k + 1}, b_k, b_{k + 1}\}$.
    \item[{Case} 3.] If $h$ is created in Step (iii) of Abbott-Hanson-Toft construction,
        i.e., if $h$ is of the form $\overline{A}_K \cup B_{K}$ for some $K \subset 
        \{1, \ldots, k + 1\}$ such that \cardinline{K} is even and $2 \leq \card{K}
        \leq \floor{(k+1)/2}$, we define $h' = \overline{A}_K \cup B_{K}
        \setminus \{a_{k}, a_{k + 1}, b_k, b_{k + 1}\}$.
    \item[{Case} 4.] If $h$ is formed in Step (iv) of Abbott-Hanson-Toft construction,
        i.e., if $h = A$, we define $h' = A \setminus \{a_k, a_{k+1}\}$.
\end{enumerate}

\noindent This completes the construction of the \nuniform[(3k+1)]\ \hg\ $H$.

\begin{proof}[Proof of Result \ref{result-block}]
    We improve the recurrence relation given in Eq. \ref{eq-block-con-abt} as a
    result of selecting $k-1$ vertices from each $h \in E_1$, as described
    above. Since this process generates multiple copies of some $(k-1)$-element
    vertex sets,  the number of distinct \hes\ formed in Step (a) in the
    construction of $H$ is reduced. Let us determine the cardinality of the set
    $\{h' : h' \text{ is generated from some } h \in E_1\}$.

    It is easy to observe that the number of distinct $h'$'s formed in Case 1 is
    $\card{E_{1c}}$.  On the other hand, the total number of distinct $h'$'s
    formed in Cases 2, 3 and 4 is at most $2^{k-1}$. It follows from the fact
    that there are $2^{k-1}$ subsets of $A \setminus \{a_k, a_{k+1}\}$ and each
    $h'$ formed in one of the Cases 2, 3 and 4 is a union of the sets
    $\bigcup_{i \in P} \{a_i\}$ and $\bigcup_{i
        \in \{1, \ldots, k-1\} \setminus P} \{b_i\}$ for some $P \subseteq \{1, \ldots, k-1\}$.

    Since we have shown in Lemma \ref{upper-const-block-lemma} that $H$ is
    \nontwo, we have the following improvement over Eq. \ref{eq-block-con-abt}. 
    \begin{align*}
        m(3k+1) &\leq (m(k-1) + 2^{k-1}) m(k+1)^2 \\& \hspace{1cm} + 
        2m_{H_{1c}}(k+1) m(k)^2 + 4m(k+1) m(k)^2 
    \end{align*}
\end{proof}

\noindent This result improves the upper bounds on \mninline\ for $n = 13$ and 16 as
follows.
\begin{itemize}
    \item For $n = 13$, we have $k = 4$. Note that $m_{H_{1c}}(5) = 51$, when the
        Fano plane \cite{klein1870theorie} $H_f$ having 7 \hes\ is used as the core \hg\
        $H_{1c}$. Therefore, we obtain $m(13) \leq (m(3) + 2^3)m(5)^2 +
        2m_{H_{1c}}(5)m(4)^2 + 4m(5)m(4)^2 \leq \mthirteenblockimprov$ by using
        $m(3) = 7$, $m(4) = 23$ and $m(5) \leq 51$ from Table
        \ref{table-cur-known-upper}. 
    \item For $n = 16$, we have $k = 5$. Note that $m_{H_{1c}}(6) = 180$, when the 
        \nontwocolornhg[4]\ $H_s$ with 23 \hes\ is used as the core \hg\ $H_{1c}$.
        Therefore, we obtain $m(16) \leq (m(4) + 2^{4})m(6)^2 +
        2m_{H_{1c}}(6)m(5)^2 + 4m(6)m(5)^2 \leq \msixteenblockimprov$ by using
        $m(4) = 23$, $m(5) \leq 51$ and $m(6) \leq 147$ from Table
        \ref{table-cur-known-upper}. 
\end{itemize}

\section{Improved Lower Bound for $m(5)$}
\label{our-work-low}

For the sake of completeness, we begin this section with a proof of the result
given by Goldberg and Russell \cite{Goldberg93towardcomputing} for the lower
bounds on \mninline\ for small values of $n$. This result uses Lemma
\ref{lem:lower_erdos} and Lemma \ref{lem:lower_schonheim} in its proof.  Let
\mvninline{l}{n} be the minimum number of \hes\ in a \nontwocolornhg\ with $l$
vertices. 

\begin{lemma}\cite{erdos1969combinatorial} $m_{2n-1}(n) = m_{2n}(n) = \tbinom{2n
        - 1}{n}$. \label{lem:lower_erdos}
\end{lemma}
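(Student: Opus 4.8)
The plan is to prove both equalities by matching upper and lower bounds, treating the cases $l = 2n-1$ and $l = 2n$ with the same underlying idea: probe the \hg\ with \twocolorings\ whose color classes are as balanced as the vertex count allows, so that very few $n$-subsets can possibly become \mono. Throughout, the key observation is that a color class of size at most $n-1$ contains no $n$-subset, hence no \he.

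For the upper bounds, I would first take the complete $n$-uniform \hg\ on $2n-1$ vertices, whose edge set is all $\binom{2n-1}{n}$ of the $n$-subsets. In any \twocoloring\ of $2n-1$ vertices, the larger color class has size at least $n$ by pigeonhole, so it contains an $n$-subset, giving a \monohe; thus this \hg\ is \nontwo\ and $m_{2n-1}(n) \leq \binom{2n-1}{n}$. For $l = 2n$, I would attach a single isolated vertex to this \hg: any proper \twocoloring\ of the enlarged \hg\ restricts to a proper \twocoloring\ of the original, so non-2-colorability is preserved while the edge count is unchanged, yielding the monotonicity $m_{2n}(n) \leq m_{2n-1}(n) = \binom{2n-1}{n}$.

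For the lower bound when $l = 2n-1$, I would show that a \nontwo\ $n$-uniform \hg\ on $2n-1$ vertices must contain every $n$-subset as an edge. Fixing any $n$-subset $S$ and coloring its vertices red with the remaining $n-1$ vertices blue, the blue class is too small to contain any $n$-subset and the red class is exactly $S$, so $S$ is the only candidate for a \monohe; non-2-colorability forces $S \in E$. As $S$ is arbitrary, $|E| \geq \binom{2n-1}{n}$. For $l = 2n$, I would instead use balanced \twocolorings: partitioning the $2n$ vertices into classes $R$ and $\overline{R}$ of size $n$ each, the only $n$-subsets that can be \mono\ are $R$ and $\overline{R}$, so $E$ must contain at least one member of each complementary pair $\{R, \overline{R}\}$. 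Since $R \neq \overline{R}$ always (disjoint equal-size sets), these pairs partition the $\binom{2n}{n}$ many $n$-subsets into $\binom{2n}{n}/2$ pairs, giving $|E| \geq \binom{2n}{n}/2$.

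The closing step is the binomial identity $\binom{2n}{n} = 2\binom{2n-1}{n}$, which turns the bound $|E| \geq \binom{2n}{n}/2$ into $|E| \geq \binom{2n-1}{n}$ and reconciles the two cases with their common upper bound. The proof is largely routine; the only point requiring a little care is the $l = 2n$ lower bound, where I must check that complementary $n$-subsets are genuinely distinct (so the count is exactly $\binom{2n}{n}/2$, not smaller) before applying the identity. Everything else follows directly from pigeonhole and the size restriction on color classes.
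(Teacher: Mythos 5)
Your proof is correct and complete: the upper bounds via the complete $n$-uniform hypergraph on $2n-1$ vertices (plus an isolated vertex for $l=2n$), the lower bound for $l=2n-1$ by probing with each $n$-set colored red, and the lower bound for $l=2n$ by pairing complementary $n$-sets together with the identity $\binom{2n}{n}=2\binom{2n-1}{n}$ all check out. Note, however, that the paper does not prove this lemma at all --- it is quoted directly from Erd\H{o}s's 1969 paper --- so there is no in-paper argument to compare against; your write-up is a valid self-contained proof of the cited fact. The only point that leans on a convention is the $l=2n$ upper bound, where your construction leaves one vertex isolated; this is harmless under the paper's definition of $m_l(n)$ (a hypergraph ``with $l$ vertices''), but is worth stating explicitly.
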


\begin{lemma}\cite{colbourn2006handbook} (Sch\"onheim bound)   
    Consider positive integers $l \geq n \geq t \geq 1$ and $\lambda \geq 1$.
    Any \nuniform\ \hg\ with $l$ vertices such that every $t$-subset of its
    vertices is contained in at least $\lambda$ \hes\ has at least $\Big\lceil
    \frac{l}{n} \Big\lceil \frac{l - 1}{n - 1} \cdots \Big\lceil \frac{\lambda
        (l - t + 1)}{n - t + 1}\Big\rceil \cdots \Big\rceil \Big\rceil $ \hes.
    \label{lem:lower_schonheim}
\end{lemma}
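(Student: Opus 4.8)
The plan is to prove this Sch\"onheim bound by induction on $t$, combining a \emph{link} (vertex-deletion) argument with a double count of vertex--hyperedge incidences. Throughout I fix $\lambda$ and write $N(l,n,t)$ for the minimum possible number of hyperedges in an $n$-uniform hypergraph on $l$ vertices in which every $t$-subset lies in at least $\lambda$ hyperedges; the goal is to show that $N(l,n,t)$ is at least the nested ceiling in the statement. The whole argument is elementary; the only care needed is in bookkeeping with the ceilings.

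For the base case $t=1$, every single vertex must lie in at least $\lambda$ hyperedges, so the number of incident pairs $(v,e)$ with $v\in e$ is at least $\lambda l$. Since each hyperedge contributes exactly $n$ such pairs, the number of hyperedges is at least $\lambda l/n$, and being an integer it is at least $\lceil \lambda l/n\rceil$, which is precisely the claimed expression when $t=1$ (the single innermost ceiling $\lceil \lambda(l-t+1)/(n-t+1)\rceil$).

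For the inductive step I assume the bound for $t-1$ (so $l\ge n\ge t\ge 2$). Fix any vertex $v$ and form the link hypergraph $H_v$ on the remaining $l-1$ vertices whose edges are $e\setminus\{v\}$ for each hyperedge $e$ containing $v$. The key observation is that two distinct hyperedges through $v$ give distinct $(n-1)$-sets after deleting $v$, so $H_v$ is a genuine $(n-1)$-uniform hypergraph (no repeated edges) on $l-1$ vertices with exactly $\deg(v)$ edges. I then verify that every $(t-1)$-subset $T$ of these $l-1$ vertices is covered at least $\lambda$ times in $H_v$: indeed $T\cup\{v\}$ is a $t$-subset of the original vertex set, hence lies in at least $\lambda$ hyperedges, each of which contains $v$ and therefore contributes an edge of $H_v$ containing $T$. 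Applying the induction hypothesis to $H_v$ gives $\deg(v)\ge N(l-1,n-1,t-1)\ge M$, where $M$ denotes the nested ceiling for parameters $(l-1,n-1,t-1)$, an integer.

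Finally, summing this degree bound over all $l$ vertices and using $\sum_v \deg(v)=n\,|E|$ (each hyperedge has exactly $n$ vertices) gives $n\,|E|\ge lM$, so $|E|\ge lM/n$ and, $|E|$ being an integer, $|E|\ge \lceil lM/n\rceil$. Since $M$ is itself the nested ceiling $\lceil \frac{l-1}{n-1}\lceil\cdots\rceil\rceil$, we get $\lceil \frac{l}{n}M\rceil=\big\lceil \frac{l}{n}\lceil \frac{l-1}{n-1}\lceil\cdots\rceil\rceil\big\rceil$, which is exactly the asserted bound. I expect the genuinely delicate points to be bookkeeping rather than ideas: confirming that deleting $v$ keeps the edges distinct (so that the link really is an admissible hypergraph to which the induction applies), and checking that the ceilings compose, which rests on the elementary identity $\lceil \frac{l}{n}M\rceil=\lceil \frac{l}{n}\lceil M\rceil\rceil$ for integer $M$ together with monotonicity of the ceiling in $M$.
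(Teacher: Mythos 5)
Your proof is correct, and it is the standard derivation of the Sch\"onheim bound; note that the paper itself offers no proof of this lemma, citing it directly from the Handbook of Combinatorial Designs, so there is no in-paper argument to compare against. Your two ingredients are exactly the classical ones: the base case $t=1$ by double-counting vertex--hyperedge incidences ($n\card{E}\ge \lambda l$), and the inductive step via the link of a vertex $v$, which is an $(n-1)$-uniform hypergraph on $l-1$ vertices in which every $(t-1)$-subset is still covered at least $\lambda$ times, giving $\deg(v)\ge M$ with $M$ the nested ceiling for $(l-1,n-1,t-1)$; summing degrees then yields $\card{E}\ge\lceil lM/n\rceil$, and since $M$ is an integer the ceilings compose as claimed. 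The two ``delicate points'' you flag are handled correctly: distinct edges through $v$ do remain distinct after deleting $v$, and the parameter constraints $l-1\ge n-1\ge t-1\ge 1$ needed to invoke the induction hypothesis hold precisely when $t\ge 2$. This is a complete and self-contained proof of the cited result.
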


\begin{lemma} \cite{Goldberg93towardcomputing}
    If $n \geq 4$, then $\displaystyle m(n) \geq \min_{x > 2n, x \in \mathbb{N}}
    \bigg\{ \max \bigg\{\mnlowerblocked{x}{n}, \mnlowerschoheim{x}{n}{n-1}
    \bigg\} \bigg\}$. \label{lem:goldberg}
\end{lemma}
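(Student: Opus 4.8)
The plan is to take a \nontwo\ \nuniform\ \hg\ $H = (V, E)$ that attains the minimum $m(n) = |E|$ and, among all such minimizers, has the fewest vertices; write $x = |V|$. The bound then follows by deriving two independent lower bounds on $|E|$ in terms of $x$ — one from a balanced-coloring counting argument and one from the Sch\"onheim bound of Lemma \ref{lem:lower_schonheim} — and observing that, since $x$ is not known in advance, $m(n)$ is at least the minimum over all admissible $x$ of the larger of the two.

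First I would establish the first expression in the $\max$ via a double count over \emph{balanced} \twocolorings, i.e.\ colorings assigning red to exactly $\lfloor x/2\rfloor$ vertices and blue to the remaining $\lceil x/2\rceil$. There are $\binom{x}{\lfloor x/2\rfloor}$ such colorings, and a fixed \he\ $e$ (with $|e| = n$) is \mono\ in exactly $\binom{x-n}{\lfloor x/2\rfloor - n}$ of them in red and $\binom{x-n}{\lceil x/2\rceil - n}$ of them in blue. Because $H$ is \nontwo, every balanced coloring contains at least one \monohe; counting incidences between balanced colorings and their \mono\ \hes\ from both sides gives $\binom{x}{\lfloor x/2\rfloor} \le |E|\big(\binom{x-n}{\lfloor x/2\rfloor - n} + \binom{x-n}{\lceil x/2\rceil - n}\big)$, and integrality of $|E|$ yields the stated ceiling.

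For the second expression I would argue that the extremal $H$ can be assumed to have every pair of vertices lying in a common \he, after which Lemma \ref{lem:lower_schonheim} with $t = 2$ and $\lambda = 1$ gives exactly $\lceil \frac{x}{n}\lceil \frac{x-1}{n-1}\rceil\rceil$. I expect this pair-covering property to be the main obstacle. The plan is to prove it by a vertex-merging argument: if some pair $\{u,v\}$ lies in no common \he, identify $u$ and $v$ into a single vertex; since no \he\ contained both, the result is still \nuniform, has at most $|E|$ \hes, and remains \nontwo\ (any proper \twocoloring\ of the merged \hg\ pulls back to one of $H$ by giving $u$ and $v$ the common color of the merged vertex, and no original \he\ can become \mono). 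This contradicts the choice of $H$ as having the fewest vertices among minimizers, so no uncovered pair exists.

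Finally I would reconcile the range of $x$. A \nontwo\ \nuniform\ \hg\ needs at least $2n-1$ vertices, since otherwise one can color at most $n-1$ vertices of each color and leave every $n$-set non-\mono, so $x \ge 2n-1$. If $x \in \{2n-1, 2n\}$, Lemma \ref{lem:lower_erdos} gives $m(n) = \binom{2n-1}{n}$, and this does not weaken the claimed bound because $\binom{2n-1}{n}$ already exceeds the value of the first expression at, e.g., $x = 2n+1$ (where the second expression is only polynomially small), so $\min_{x>2n}\max\{\cdot,\cdot\} < \binom{2n-1}{n}$. If instead $x > 2n$, both derived bounds apply to this particular $x$, whence $m(n) \ge \max\{\cdot,\cdot\} \ge \min_{x>2n}\max\{\cdot,\cdot\}$, completing the proof.
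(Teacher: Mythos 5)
Your proof is correct, and it uses the same three ingredients as the paper's --- the balanced-coloring double count, the Sch\"onheim bound via pair-covering, and the vertex-merging operation --- but assembles them in a genuinely different way. The paper argues in the contrapositive by induction on the number of vertices: it fixes the threshold $r = \min_{x>2n}\max\{f(x),g(x)\}$, observes that $f$ is non-increasing and $g$ is non-decreasing for $x>2n$ so that $r$ is attained at a crossing point $v_{\mathrm{opt}}$, shows that any \hg\ with fewer than $r$ \hes\ and at most $v_{\mathrm{opt}}$ vertices admits a proper balanced \twocoloring\ (because $r\le f(\card{V})$ in that range), and for more than $v_{\mathrm{opt}}$ vertices uses $r \le g(\card{V})$ to find an uncovered pair, merge it, and invoke the induction hypothesis. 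You instead work directly with a single extremal witness: a minimizer of $\card{E}$ that, among all minimizers, has the fewest vertices. Vertex-minimality lets you run the merging argument once to conclude that every pair is covered, so $f(x)\le\card{E}$ and $g(x)\le\card{E}$ hold simultaneously at the actual vertex count $x$, giving $m(n)\ge\max\{f(x),g(x)\}\ge\min_{x>2n}\max\{f(x),g(x)\}$. This buys a shorter argument with no induction and no need for the monotonicity of $f$ and $g$ or the identification of $v_{\mathrm{opt}}$; what it gives up is the slightly stronger fact implicit in the paper's proof that \emph{every} \nuniform\ \hg\ with fewer than $r$ \hes\ is properly \twocolor, irrespective of its vertex count. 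Your handling of the small-vertex cases ($x\le 2n$) via Lemma \ref{lem:lower_erdos} together with the estimate $\binom{2n-1}{n}\ge f(2n+1)\ge g(2n+1)$ for $n\ge 4$ matches the paper's Case 2 and is the one place where the hypothesis $n\ge 4$ is actually used.
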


\begin{proof}
    Let us consider an \nuniform\ \hg\ \hgnot\ such that the number of \hes\
    satisfies $\displaystyle \card{E} < \min_{x > 2n, x \in \mathbb{N}} \bigg\{
    \max \bigg\{ \mnlowerblocked{x}{n}, \mnlowerschoheim{x}{n}{n-1} \bigg\}
    \bigg\}$.  We call a \twocoloring\ of the \hg\ to be \itt{balanced} if the
    coloring has $\floor{\card{V}/2}$ red vertices and $\ceil{\card{V}/2}$ blue
    vertices. It can be noted that the possible number of ways to give a
    balanced coloring for $H$ is $\binom{\card{V}}{\floor{\card{V}/2}}$ and not
    all of these are proper \twocolorings. Let us define $f(x) =
    \mnlowerblocked{x}{n}$, $g(x) = \mnlowerschoheim{x}{n}{n-1}$ and
    $\displaystyle r = \min_{x > 2n, x \in \mathbb{N}} \big\{ \max \big\{ f(x),
    g(x) \big\} \big\}$. Let this minimum value $r$ be obtained by $x =
    v_{\text{opt}}$.  When $x > 2n$, observe that $f(x)$ is non-increasing and
    $g(x)$ is non-decreasing with increasing $x \in \mathbb{N}$. Moreover, we
    also observe that $\binom{2n-1}{n} \geq f(2n+1) \geq g(2n+1)$ for $n \geq
    4$.

    \begin{enumerate}
        \item[{Case }1.] If $n \leq \card{V} \leq 2n - 2$, any balanced coloring of its vertex
            set is a proper \twocoloring\ of $H$.

        \item[{Case }2.] If $\card{V} = 2n - 1$ or $\card{V} = 2n$, it follows from Lemma
            \ref{lem:lower_erdos} that \mvninline{2n-1}{n} = \mvninline{2n}{n} =
            $\binom{2n - 1}{n}$. Since $\card{E} < r \leq \binom{2n - 1}{n}$,
            $H$ is properly \twocolor. 

        \item[{Case }3.] If $2n + 1 \leq \card{V} \leq v_{\text{opt}}$, consider a balanced
            coloring of $H$. We say that such a coloring is \itt{blocked} by a
            \he\ if it is \mono\ in the coloring. Note that a red \monohe\
            blocks $\tbinom{ \card{V}- n}{\lfloor \card{V}/2 \rfloor - n}$ and a
            blue \monohe\ blocks $\binom{\card{V} - n}{\ceil{\card{V}/2} - n}$
            such colorings.  In order to ensure that none of these balanced
            colorings is a proper \twocoloring\ of $H$, we need at least
            $f(\card{V})$ \hes.  Since $\card{E} < r \leq f(\card{V})$ for $2n +
            1 \leq \card{V} \leq v_{\text{opt}}$, at least one of the balanced
            colorings of $H$ is a proper \twocoloring\ of it.

        \item[{Case }4.] If $\card{V} > v_{\text{opt}}$, assume the induction hypothesis
            that any \nuniform\ \hg\ with $\card{V} - 1$  vertices and
            $\card{E}$ \hes\ is properly \twocolor. The base case $\card{V} =
            v_{\text{opt}}$ is proved in Case 3. If there exists a pair of
            vertices $\{v_i, v_j\}$ not contained together in any hyperedge of
            $H$, consider a new \hg\ \hgprimenot\ constructed by merging $v_i$
            and $v_j$ into a new vertex $v$. Since $H'$ is \nuniform\ with
            $\card{V'} = \card{V} - 1$ and $\card{E'} = \card{E}$, we know from
            the induction hypothesis that $H'$ is properly \twocolor. This
            coloring of $H'$ can be extended to a proper \twocoloring\ of $H$ by
            assigning the color of $v$ to $v_i$ and $v_j$.  Since $\card{E} < r$
            and it follows from Lemma \ref{lem:lower_schonheim} that the minimum
            number of \hes\ required to ensure that each pair of vertices is
            contained in at least one \he\ is $g(\card{V}) \geq r$, we are
            guaranteed to have a pair of vertices $\{v_i, v_j\}$ not
            contained together in any \he\ of $H$. 
    \end{enumerate}
\end{proof}

Lemma \ref{lem:goldberg} implies that $m(5) \geq 28$, which is obtained when 
$x=23$. We improve this to $m(5) \geq 29$ using the following lemma. The 
first three cases of the proof for this improved lower bound are the same 
as the ones used in the proof above. We use Lemma \ref{lem:radhakrishnan_srini} 
to improve Case 4 of the proof.

\begin{lemma}\cite{radhakrishnan2000improved} 
    Consider a positive integer $\gamma$ and a fraction $p \in [0, 1]$.  Any
    \nuniform\ \hg\ \hgnot\ satisfying $\card{\{ \{e_1, e_2\} : e_1, e_2 \in E,\
        \card{e_1 \cap e_2} = 1 \}} \leq \gamma$ is properly \twocolor\ if $2^{-n+1} (1 -
    p)^n \card{E} + 4 \gamma \big( 2^{-2n+1} p \int_0^1 (1 - (xp)^2)^{n-1}
    \mathrm{d}x \big) < 1$. \label{lem:radhakrishnan_srini}
\end{lemma}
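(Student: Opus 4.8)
The plan is to prove this colorability criterion by the random coloring and recoloring method of Radhakrishnan and Srinivasan, combined with a union bound over the two ways the final coloring can fail. First I would set up a two-stage random experiment on the vertex set $V$: in the first stage, color each vertex red or blue independently and uniformly; in the second (recoloring) stage, let each vertex independently become \emph{active} with probability $p$, and give every active vertex an independent time $x_v$ drawn uniformly from $[0,1]$. The recoloring rule processes the active vertices in increasing order of their times and flips the color of an active vertex exactly when, at the moment it is processed, it lies in an edge that is monochromatic in its current color, so that the flip repairs that edge. The rule must be stated carefully enough that (i) any edge that is monochromatic after the initial coloring and contains at least one active vertex is guaranteed to be repaired, and (ii) the only mechanism by which a non-monochromatic edge can become monochromatic is a single flip of one of its vertices.

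With the process fixed, I would show that the final coloring is improper only if one of two bad events occurs, which produces the two terms of the hypothesis. The first bad event is that some edge $e$ is monochromatic in the initial coloring and contains no active vertex, so it is never touched by the recoloring; since the probability that $e$ is monochromatic is $2^{-n+1}$ and the $n$ vertices of $e$ are all inactive with probability $(1-p)^n$, linearity of expectation bounds the expected number of such edges by $2^{-n+1}(1-p)^n|E|$. The second bad event is that a flip creates a monochromatic edge that is not subsequently repaired. If flipping a vertex $v$ turns an edge $e$ monochromatic, then the flip was itself triggered by a monochromatic edge $e'$ with $v \in e'$, so $v \in e \cap e'$; the crux is to argue that the pairs $(e,e')$ which can generate an unrepaired conflict are precisely those with $|e \cap e'| = 1$, of which there are at most $\gamma$ by hypothesis.

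The main technical step, and where I expect most of the work to lie, is bounding the expected number of these conflict pairs by $4\gamma \cdot 2^{-2n+1} p \int_0^1 (1-(xp)^2)^{n-1}\,\mathrm{d}x$. For a fixed pair $e,e'$ with $e \cap e' = \{v\}$, I would condition on the color pattern that makes a conflict possible, namely $e'$ monochromatic in $v$'s color while every vertex of $e \setminus \{v\}$ carries the opposite color; because $e$ and $e'$ share only $v$, these constraints fall on disjoint vertex sets and have probability $2^{-2n+2}$ for each of the two colors of $v$, contributing the prefactor $2 \cdot 2^{-2n+2} = 4 \cdot 2^{-2n+1}$. I would then condition on $v$ being active (the factor $p$) with time $x$, and compute, over the activity and times of the remaining $n-1$ relevant vertices, the probability that $v$ is genuinely the flipping vertex for $e'$ and that $e$ is not repaired by a later flip; independence of these $n-1$ vertices yields a product in which each contributes a factor $1-(xp)^2$, and integrating the density of $v$'s time over $[0,1]$ gives the stated integral.

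Finally, I would combine the two bounds by linearity of expectation: the expected total number of bad events is at most $2^{-n+1}(1-p)^n|E| + 4\gamma \cdot 2^{-2n+1} p \int_0^1 (1-(xp)^2)^{n-1}\,\mathrm{d}x$, which is strictly less than $1$ by hypothesis. Hence with positive probability no bad event occurs, the random process outputs a proper $2$-coloring, and $H$ satisfies Property B. The delicate points are specifying the flip rule so that failure decomposes cleanly into exactly these two event types, in particular showing that intersections of size at least two never create an unrepairable conflict, and verifying that the per-vertex interference probability in the conflict computation is exactly $(xp)^2$, which is what forces the integrand to take the form $(1-(xp)^2)^{n-1}$.
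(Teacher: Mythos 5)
The paper itself contains no proof of this lemma: it is quoted from Radhakrishnan and Srinivasan \cite{radhakrishnan2000improved} and invoked as a black box in the proof of Result~\ref{result-improv-lower} (Case~4, with $n=5$, $p=0.3$, $\gamma=335$), so the only meaningful comparison is with the original source argument, not with anything in this paper. Your sketch is a faithful reconstruction of that argument: uniform random initial coloring, independent activation with probability $p$ with i.i.d.\ uniform delays, delay-ordered repair flips, and a union bound over exactly the two failure modes that produce the two terms of the hypothesis --- an initially monochromatic edge all of whose vertices are inactive (expected number $2^{-n+1}(1-p)^n\lvert E\rvert$), and a flip of a vertex $v$, triggered by an edge $e'$, that leaves an edge $e\ni v$ monochromatic for good. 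Your reduction to pairs with $\lvert e\cap e'\rvert = 1$ is also the right one: at the instant $v$ flips, $e'$ is monochromatic in $v$'s old color while $e\setminus\{v\}$ is monochromatic in the new one, so the two edges cannot share any vertex besides $v$.

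Two pieces of bookkeeping in your third paragraph would need repair in a full write-up. First, the required color pattern fixes all $2n-1$ vertices of $e\cup e'$, so it has probability $2^{-2n+1}$ per choice of color \emph{and} of which edge plays the triggering role; the factor $4$ multiplying $\gamma$ is $2$ (colors) times $2$ (ordered roles of the unordered pair counted by $\gamma$), whereas your count of $2\cdot 2^{-2n+2}$ overshoots by a factor of $2$ and misattributes the $4$ to colors alone. Second, the per-index factor is not ``exactly $1-(xp)^2$'' as an interference probability: conditioning on $v$ active with delay $x$, the bad event forces, for each of the $n-1$ remaining indices, that no vertex of $e'\setminus\{v\}$ is active with delay before $x$ (such a vertex would itself have flipped first, breaking the trigger) and that no vertex of $e\setminus\{v\}$ is active with delay after $x$ (such a vertex would repair $e$), which gives the sharper product $(1-px)^{n-1}\bigl(1-p(1-x)\bigr)^{n-1}$; the stated integrand then follows from the elementary inequality $(1-px)\bigl(1-p(1-x)\bigr)\le 1-(px)^2$, valid whenever $px\le 1$, since the difference of the two sides is $p(1-px)x^0\ge 0$ after expansion. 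With these two corrections your outline matches the proof in \cite{radhakrishnan2000improved}.
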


\begin{proof}[Proof of Result \ref{result-improv-lower}]
    Let us consider a \nuniform[5]\ \hg\ \hgnot\ with at most 28 \hes.  We show
    that it is properly \twocolor. 

    \begin{enumerate}
        \item[{Case }1.] If $5 \leq \card{V} \leq 8$, any balanced coloring of its vertex
            set is a proper \twocoloring\ of $H$. 

        \item[{Case }2.] If $\card{V} = 9$ or $\card{V} = 10$, it follows from Lemma
            \ref{lem:lower_erdos} that \mvninline{9}{5} = \mvninline{10}{5} =
            126. Since $\card{E} \leq 28$, $H$ has a proper \twocoloring.

        \item[{Case }3.] If $11 \leq \card{V} \leq 22$, consider a balanced coloring of
            $H$. We observe that a red \monohe\ blocks $\tbinom{ \card{V}-
                5}{\lfloor \card{V}/2 \rfloor - 5}$ and a blue \monohe\ blocks
            $\binom{\card{V} - 5}{\ceil{\card{V}/2} - 5}$ such colorings.  In
            order to ensure that none of these balanced colorings is a proper
            \twocoloring\ of $H$, we need at least $\mnlowerblocked{\card{V}}
            {5}$ \hes. Since $11 \leq \card{V} \leq 22$, it implies that we need
            at least 29 \hes\ to ensure that no balanced coloring of $H$ is a
            proper \twocoloring.

        \item[{Case }4.] If $\card{V} = 23$ and there exists a pair of vertices $\{v_i,
            v_j\}$ not contained together in any \he\ of $H$, we construct a new
            \hg\ \hgprimenot\ by merging vertices $v_i$ and $v_j$ into a new
            vertex $v$. We observe that $H'$ is \nuniform[5]\ with 22 vertices
            and \cardinline{E} \hes. It follows from Case 3 that $H'$ is
            properly \twocolor.  This coloring of $H'$ can be extended to a
            proper \twocoloring\ of $H$ by assigning the color of $v$ to $v_i$
            and $v_j$. If $\card{E} \leq 27$, note that Lemma
            \ref{lem:lower_schonheim} ensures that there exists a pair of
            vertices not contained together in any \he\ of $H$.  Therefore, we
            would complete the proof by assuming that $\card{E} = 28$ and  every
            pair of vertices is contained in at least one \he\ of $H$.  For such a
            \hg, we show that the cardinality of the set  $\{ \{e_1, e_2\} :
            e_1, e_2 \in E,\ \card{e_1 \cap e_2} = 1 \}$ is at most 335. Setting
            $p = 0.3, \gamma = 335, n = 5$ and $\card{E} = 28$ in Lemma
            \ref{lem:radhakrishnan_srini}, we observe that $H$ is properly
            \twocolor\ since $2^{-n+1} (1 - p)^n \card{E} +  4 \gamma \cdot
            2^{-2n+1} p \int_0^1 (1 - (xp)^2)^{n-1} \mathrm{d}x < 1$. 

            In order to show that the cardinality of the set $\{ \{e_1, e_2\} :
            e_1, e_2 \in E,\ \card{e_1 \cap e_2} = 1 \}$ is at most 335, we
            consider the degree sequence of $H$. Note that the \itt{degree} of a
            vertex is defined as the number of \hes\ it is contained in and the
            \itt{degree sequence} of a \hg\ is the ordering of the degrees of
            its vertices in a non-increasing order.  Consider an arbitrary
            vertex $u$ of $H$.  Observe that  there are 22 distinct vertex pairs
            involving $u$ and any \he\ containing $u$ has 4 such pairs in it.
            Therefore, the degree of $u$ is at least 6 and there exists another
            vertex $u'$ such that $\{u, u'\}$ is contained in at least two
            different \hes\ of $H$.  Since the sum of the degrees of the
            vertices of $H$ is 140, the only  possible degree sequences of $H$
            are  $\langle 8, 6, \ldots, 6 \rangle$ and $\langle 7, 7, 6, \ldots, 
            6 \rangle$. For the first sequence, the cardinality of the set $\{
            \{e_1, e_2\} : e_1,e_2 \in E,\ \card{e_1 \cap e_2} = 1 \}$ is upper
            bounded by $(\binom{6}{2} - 1) \cdot 22 + (\binom{8}{2} - 1) = 335$.
            For the second sequence, it is upper bounded by $(\binom{6}{2} - 1)
            \cdot 21 + (\binom{7}{2} - 1) \cdot 2 = 334$. 

        \item[{Case }5.] If $\card{V} \geq 24$, assume the induction hypothesis that any
            \nuniform[5]\ \hg\ with $\card{V} - 1$  vertices and $\card{E}$
            \hes\ is properly \twocolor. The base case $\card{V} = 23$ is proved
            in Case 4. If there exists a pair of vertices $\{v_i, v_j\}$ 
            not contained together in any \he\ of $H$, consider a new \hg\
            \hgprimenot\ constructed by merging $v_i$ and $v_j$ into a new
            vertex $v$. Since $H'$ is \nuniform[5] with $\card{V'} = \card{V}
            - 1$ and $\card{E'} = \card{E}$, we know from the induction
            hypothesis that $H'$ is properly \twocolor. This coloring of $H'$
            can be extended to a proper \twocoloring\ of $H$ by assigning the
            color of $v$ to $v_i$ and $v_j$.  Since it follows from Lemma
            \ref{lem:lower_schonheim} that the minimum number of \hes\ required
            to ensure that each pair of vertices is contained in at least one
            \he\ is $\mnlowerschoheim{\card{V}}{5}{4} \geq 29$, we are guaranteed
            to have a pair of vertices $\{v_i, v_j\}$ not contained together in
            any \he\ of $H$. 
    \end{enumerate}
\end{proof}

\section{Conclusion \label{concl-open}}
In this paper, we establish the lower bound $m(5) \geq 29$ which is
still far from the \bk\ upper bound $m(5) \leq 51$.  We also establish
improved upper bounds for \mninline[8], \mninline[13], \mninline[14],
\mninline[16]\ and \mninline[17].  In Table \ref{table-new-known-bounds}, we
highlight these improved bounds on \mninline\ for $n \leq 17$. It would be
interesting to determine the exact values of \mninline\ for $n \geq 5$. 

\begin{table}
\begin{center}
\begin{tabular}{|c|c|c|}
\hline
$n$ & $m(n)$ & Corresponding construction/recurrence relation\\
\hline
1   & $m(1) = 1$            & Single Vertex\\
2   & $m(2) = 3$            & Triangle Graph\\
3   & $m(3) = 7$            & Fano Plane \cite{klein1870theorie}\\
4   & $m(4) = 23$           & \cite{ostergaard2014minimum},
\cite{seymour1974note}\\
5   & $m(5) \leq 51$        & $m(5) \leq 2^4 + 5 m(3)$\\
6   & $m(6) \leq 147$       & $m(6) \leq m(2) m(3)^2$\\
7   & $m(7) \leq 421$       & $m(7) \leq 2^6 + 7 m(5)$\\
8   & $m(8) \leq {\bf \meightmultiimprov} $      & $m(8) \leq 2 m(3) m(5) + 
                                    \tbinom{4}{2} m(3) m(3) + \tbinom{4}{3} m(5) $\\
9   & $m(9) \leq 2401$      & $m(9) \leq m(3)^4$\\
10  & $m(10) \leq 7803$     & $m(10) \leq m(2) m(5)^2$\\
11  & $m(11) \leq 25449$    & $m(11) \leq 15 \cdot 2^8 + 9 m(9)$\\
12  & $m(12) \leq 55223$    & $m(12) \leq m(3)^4 m(4)$\\
13  & $m(13) \leq {\bf \mthirteenblockimprov}$   & $m(13) \leq (m(3) +
                                    2^3)m(5)^2 +  2m_{H_f}(5)m(4)^2 + 4m(5)m(4)^2$ \\
14  & $m(14) \leq {\bf \mforteenmultiimprov}$   & $m(14) \leq 2 m(9) m(5) +
                                    m(5)^2 m(4) + \tbinom{4}{1} m(9) m(4) + 
                                    \tbinom{4}{4} m(5)^2$\\
15  & $m(15) \leq 857157$   & $m(15) \leq m(3)^5 m(5)$\\
16  & $m(16) \leq {\bf \msixteenblockimprov}$  & $m(16) \leq (m(4) +
                                    2^{4})m(6)^2 + 2m_{H_s}(6)m(5)^2 + 4m(6)m(5)^2$\\
17  & $m(17) \leq {\bf \mseventeenmultiimprov}$ & $m(17) \leq 2 m(10) m(7) + 2
                                    m(7)^2 m(3) + \tbinom{3}{1}m(10) m(3)^2 + 
                                    \tbinom{3}{3} m(7)^2 $ \\
\hline
\end{tabular}
\end{center}
\caption{Improved upper bounds on $m(n)$ for small values of $n$}
\label{table-new-known-bounds}
\end{table}

\small

\end{document}